\newcommand{\lb}{\langle}
\newcommand{\rb}{\rangle}
\def\divv{{\rm div}\,}
\newcommand{\rA}{\mathrm{A}}
\newcommand{\rH}{\mathrm{ H}}
\newcommand{\rV}{\mathrm{ V}}
\newcommand\dela[1]{}
\newcommand{\eps}{\varepsilon}
\numberwithin{equation}{section}
\newtheorem{theorem}{Theorem}[section]
\newtheorem{lemma}[theorem]{Lemma}
\newtheorem{remark}[theorem]{Remark}
\newtheorem{proposition}[theorem]{Proposition}
\newtheorem{corollary}[theorem]{Corollary}
\newtheorem{Prop}[theorem]{Proposition}
\newtheorem{assumption}[theorem]{Assumption}
\begin{document}

\title[Splitting for 2D NSE]{
Splitting up method for the  2D stochastic Navier-Stokes equations }
\author[H. Bessaih]{H. Bessaih}
\address{University of Wyoming, Department of Mathematics, Dept. 3036, 1000
East University Avenue, Laramie WY 82071, United States}
\email{ bessaih@uwyo.edu}
\author[Z. Brze\'zniak]{Z. Brze\'zniak}
\address{University of York, Department of Mathematics,
Heslington, York YO10 5DD, UK}
\email{zdzislaw.brzezniak@york.ac.uk}
\author[A. Millet]{A. Millet}
\address{SAMM, EA 4543,
Universit\'e Paris 1 Panth\'eon-Sorbonne, 90 Rue de
Tolbiac, 75634 Paris Cedex France \textit{ and} Laboratoire de
Probabilit\'es et Mod\`eles Al\'eatoires,
  Universit\'es Paris~6-Paris~7, Bo\^{\i}te Courrier 188,
      4 place Jussieu, 75252 Paris Cedex 05, France}
\email{annie.millet@univ-paris1.fr \textit{ and}
annie.millet@upmc.fr}
\date{\today}

\begin{abstract}
 In this paper, we deal with the convergence of an iterative scheme for
the 2-D stochastic Navier-Stokes Equations on the torus
 suggested by the Lie-Trotter product formulas
for stochastic differential equations of parabolic type.
The stochastic system  is split into two problems which are simpler for numerical computations.
An estimate of the approximation error is given  for
periodic boundary conditions.
In particular,
 we prove
  that the strong speed of the convergence  in probability
is almost $1/2$. This is shown by means of an $L^2(\Omega,\mathbb{P})$ convergence localized on a set of arbitrary large
probability. The assumptions on the diffusion coefficient
depend on the fact that some multiple of the Laplace
operator is present or not with the multiplicative stochastic term. Note that if one of the splitting steps only
contains the stochastic integral, then
the diffusion coefficient
may not contain any gradient of the solution.
\end{abstract}

\maketitle
{\bf Keywords:} Splitting up methods,
Navier-Stokes Equations, hydrodynamical models, stochastic PDEs, strong convergence, speed of convergence in probability\\
\\
\\
{\bf Mathematics Subject Classification 2000}: Primary 60H15, 60F10;
60H30; Secondary 76D06, 76M35. \maketitle

\maketitle
\section{Introduction}\label{sec-intro}

Assume that  $D$ is a regular bounded open domain of $\mathbb{R}^{2}$ with boundary $\partial{D}$ and let $\mathbf{n}$  denote
the external normal field to the boundary.  Let us  consider the Navier-Stokes Equations with, for concreteness, the Dirichlet boundary conditions.

\begin{equation}\label{NStokes}
\frac{\partial u(t,x)}{\partial t}-\nu\Delta u(t,x)+(u(t,x)\cdot\nabla) u(t,x)+\nabla p(t,x)=G(t,u(t,x))\dot{W}(t,x),
\end{equation}
$t\in [0,T]$, $x\in D$, with the incompressibility condition
\begin{equation}\label{incomp}
\nabla\cdot u(t,x)=0,\quad t\in [0,T], \quad x\in D,
\end{equation}
the boundary  condition
\begin{equation}\label{bound}
 u=0 \mbox{ on } \partial D
\end{equation}
and the initial condition
\begin{equation}\label{init}
u(0,x)=u_{0}(x) \quad x\in D.
\end{equation}

Here, $u$ is the velocity, $p$ is the pressure, $\nu$ is the viscosity coefficient, $W$ is a cylindrical Brownian motion and $G$ is an operator valued function  acting on divergence free vector fields. Details  will be given in the next sections.

Our  main results from sections \ref{sect-periodic} and \ref{sec-convergence}
are  valid  only  for the stochastic Navier-Stokes equations with the periodic boundary conditions. Our results from section \ref{sec3}
are valid for  for the stochastic Navier-Stokes equations with both the periodic and Dirichlet boundary conditions, 
see section \ref{sect-periodic} for details.  \dela{Moreover, in the latter case, our convergence rate are better than in the general case, see \ref{sec-convergence}.}

 The well-posedness of the system \eqref{NStokes}-\eqref{incomp} has been extensively investigated.
Under very general conditions on the operator $G$ we refer to \cite{Flandoli_Gatarek_95} for martingale stationary solutions;
the uniqueness of the $2$-dimensional case has been investigated in \cite{Schmalfuss_1997}.
For the strong solutions and more general models in hydrodynamic we refer to \cite{ChuMill}.
 For a comprehensive setting and higher dimension, we refer to \cite{Fla} and the references therein.

In this paper, we deal with the convergence of some iterative schemes suggested by the Lie-Trotter product formulas for
 the stochastic differential equations of parabolic type.
 The stochastic system  is split into two problems which are simpler for numerical computations.
  Other numerical schemes have been used in the literature.
  This method has been used for purely theoretical purposes of proving the existence
   of solutions of stochastic geometric heat equation  in \cite{Funaki_1992}.
 Let us mention two recent papers on the topic of numerical approximations to 2D stochastic Navier-Stokes Equations.

 Carelli and Prohl \cite{CarPro}   studied the strong speed of convergence of some
 fully and implicit semi-discrete Euler time  and finite element based space-time discretization  schemes.
  As in \cite{Prin} for the stochastic Burgers equation, they  proved the
 $L^2(\Omega, {\mathbb P})$ convergence of the error term localized by a set (depending on the discretization),
 whose probability converges to   $1$ as the
 time mesh decreases to $0$. They assume  that the initial condition $u_0 \in L^8(\Omega,V)$, and that the diffusion coefficient of
 the multiplicative noise  may contain some gradient of the solution.  These authors studied  the speed of convergence in probability
 (introduced by Printemps in \cite{Prin}) of the fully implicit scheme to the solution in two different spaces
 $C([0,T];H)\cap L^2(0,T;V)$  and $C([0,T];V')\cap L^2(0,T;H)$ (here $V$ and $H$ are the usual functional spaces used
 for the Navier-Stokes equations, they will be defined later on).
 They showed that these are equal respectively  to $1/4 - \alpha$ and  $1/2-\alpha$
 for some positive $\alpha$.  Finally, they showed that the difference between the fully and the semi-implicit schemes
converges to $0$ in probability in $C([0,T];V')\cap L^2(0,T;H)$ with the speed $1/2-\alpha$.
The speed of convergence of some space-time
discretization was also investigated. Note that in \cite{CarPro}, no projection on divergence-free fields is made and the pressure term
is part of the discretized process.

Using the semi-group and cubature techniques, D\"orsek \cite{Dorsek} studied the weak speed of    convergence of a certain
Strang  time-splitting scheme combined  with a Galerkin approximation in the
space variable for the SNSEs with an additive noise. Two equations were solved  alternatively on each time interval of the subdivision
 to approximate the Galerkin approximation:
one deterministic equation only contains the bilinear term and the other one is the stochastic heat equation.
The   weak speed of  convergence of this  approximation was given in terms of the largest eigenvalue $N$ of the space
of eigenvectors of the Stokes operator where the projection is done: if  a function $\varphi$ belongs to ${\mathcal C}^6(L^2(D))$
with some exponential control of its derivatives of order  up to $6$,
then the speed of convergence of the error term between the Strang time splitting $v^N_n$  with time mesh $T/n$ of
the Galerkin approximation $u^N$, that is
$\big| \mathbb{E}[\varphi(u^N(t))] - \mathbb{E}[\varphi(u^N_n(t))]\big|$
is   estimated from above in terms of $N$ and $n$.
Finally, let us note that in that paper the noise is both additive and finite-dimensional and that the initial condition again belongs  to the space  $V$.

 One of the aims of the current  paper is to show that a certain  splitting up method for the stochastic Navier-Stokes Equations
 with multiplicative noise is convergent. Our  splitting method is
 implemented using on each time interval two consecutive steps. In  the first step,  the deterministic Navier-Stokes Equations
 (with modified viscosity) is solved. The corresponding solution is denoted by  $u^n$. In the second step the linear parabolic
Stokes equation (again with  a modified viscosity) with a stochastic perturbation is solved. The corresponding solution is denoted by  $y^n$.
The goal of the paper  is twofold. On one hand, we  establish the "strong" speed of convergence in $L^2(D)$  uniformly
 on the time  grid after the two steps have been performed, that is for the difference $|y^n(t_k^-)-u(t_k)|$.
  We furthermore prove that the same speed of convergence
holds in $ L^2(0,T; V)$
 for $u^n-u$ and $y^n-u$.
 The regularity assumptions on the diffusion coefficient and
 on the initial condition are similar to that in \cite{CarPro}. For instance we  assume  $\mathbb{E}\Vert u_0\Vert _{V}^8<\infty$,
 but either the topology we use is sharper for a similar speed of convergence or  the speed of convergence is doubled in the same topological space.
While the first step consists in solving the deterministic Navier-Stokes
equations,  the next step consists in only computing the stochastic integral (with no smoothing Stokes operator), no gradient
is allowed in the diffusion coefficient and in that case the regularity of the corresponding process $y^n$ is weaker than that
of the terminal process.
Following the definition  introduced in \cite{Prin}, we
deduce that the speed of convergence in probability (in the above functional spaces) of this splitting scheme is almost $1/2$.
When the second step of the splitting contains some part of the
viscosity together with the stochastic integral,
due to the smoothing effect of the Stokes operator, the diffusion coefficient may contain some gradient terms
provided that the stochastic parabolicity condition holds. In fact the convergence of the scheme requires a stronger
control of the gradient part in the diffusion coefficient. The main result states an $L^2(\Omega, \mathbb{P})$ convergence
of the error term localized on a set which depends on the solution and whose probability can be made as close to 1 as required.
Furthermore, for technical reasons, the speed of convergence is obtained for $z^n-u$ in $L^\infty(0,T;H) \cap L^2(0,T;V)$,
where $z^n$ is a theoretical process which is defined in terms of $u^n$ and $y^n$ and such that $z^n(t_k)=y^n(t_k^-)=u^n(t_{k+1}^+)$.

In  section 7  of a paper \cite{BCP} by the first named authour with Carelli and Prohl,
 the convergence of the finite element approximation for the $2$-D stochastic Navier-Stokes Equations
  by using the local monotonicity trick of Barbu
was proved. The use of the Barbu trick allowed the authours to identify the strong solution without using
  a compactness method. In this respect that paper is similar to ours. However,
  the time-splitting approximation we use in the current paper as well as the proofs are completely different.

The strong convergence of the splitting method has already been studied in a series of papers by Gy\"ongy and Krylov (\cite{GK},
\cite{GK2})  but for parabolic stochastic PDEs with some degenerate stochastic parabolicity condition.
However, the linear setting used in these papers does not cover the hydrodynamical models used in the present one.
The splitting method was  also studied in \cite{BrzMil} for SPDEs including the
stochastic linear Schr\"odinger equations.

Section \ref{sec2} describes the properties on the hydrodynamical models, the noise, the assumptions on the diffusion coefficients
ensuring well-posedeness of the solution in various functional spaces. It also describes
 both splitting schemes, depending on the fact that some multiple of the Stokes operator is kept
with the stochastic integral or not. Section \ref{sec3} proves several a priori bounds such as the control of the norms of the approximating
processes independently of the time mesh, as well as the control  of the difference of both processes introduced in the steps of the
algorithm in terms of the time mesh.
Note that the abstract properties of the operators  needed  to prove the results in  sections \ref{sec2} and \ref{sec3}
are satisfied not only by the stochastic
2D Navier-Stokes Equations, but also by various hydrodynamical models, such as the Boussinesq, magneto-hydrodynamical  and
B\'enard models; the framework also contains the shell and dyadic models of turbulence (see e.g. \cite{ChuMill}).
Sections  \ref{sect-periodic} and \ref{sec-convergence} focuse on the 2D Navier-Stokes Equations on the torus ;
they require some further properties involving the Stokes operator and the non linearity, more regularity on the initial condition
and additional properties on the "diffusion" coefficient $G$.
Section \ref{sect-periodic} provides further a priori bounds "shifting" the spatial regularity.  Section \ref{sec-convergence}  proves the
$L^2(\Omega, {\mathbb P}) $ convergence of a localized version of both algorithms with some explicit control on the
constant used in the localization. This enables us to deduce the speed of convergence in probability.

As usual, we denote by $C$ (resp. $C(T)$)  a positive constant (resp. a positive constant depending on $T$)
which may change from line to line, but does not depend on $n$.
\bigskip

\section{Preliminaries and assumptions}\label{sec2}

\subsection{Functional setting}\label{AB}

We at first describe the functional setting and the operators of the 2D Navier-Stokes Equations. As in \cite{Temam_2001} we denote by   $\mathcal{V}$
the space of infinitely differentiable divergence free  and compactly supported vector
fields $u$ on $ D$.

Let the Hilbert space $(H, |\cdot|) $  be the closure of  ${\mathcal V}$
in the $\mathbb{L}^{2}=L^2(D,\mathbb{R}^2)$ space.
Let also  $ E$ be the Hilbert space consisting  of all $ u\in \mathbb{L}^2$ such that $\divv u \in\,L^2(D)$.
It is known, see  \cite[Theorem I.1.2]{Temam_2001}, that
there exists a unique bounded linear map $\gamma_{\mathbf{n}}: E\to H^{-\frac12}(\partial(D))$,
where $H^{-\frac 12}(\partial{D})$ is  the dual space of $H^{1/2}(\partial{D})$ (and equal to  the image in $L^2(\partial{D})$
 of the trace operator $\gamma_0:H^{1,2}(D) \to L^2(\partial{D})$)  such that
\begin{equation}\label{Temam_1.18}
\gamma_{\mathbf{n}}(u)= \mbox{ the restriction of } u\cdot \mathbf{n} \mbox{ to } \partial{D}, \;\; \mbox{ if } u\in \mathcal{C}^\infty(\overline{D}).
\end{equation}
Then, it is known that $H$ is equal to the space of all vector fields $u \in E$ such that $\gamma_{\mathbf{n}}(u)=0$.
Let us denote by  $V$ the  separable Hilbert space which is equal to the   closure in the space $\mathbb{H}^{1,2}=H^{1,2}(D,\mathbb{R}^2)$ of
the space $\mathcal{V}$ equipped  with the inner product inherited from $\mathbb{H}^{1,2}$ and with corresponding  norm denoted by
 $\| ~\cdot~\|$. Identifying $H$ with its
dual space $H^\prime$, and $H^\prime$ with the corresponding natural subspace of
the dual space $V^\prime$, we have the standard Gelfand triple $V\subset H\subset
V^\prime$ with continuous dense embeddings. Let us note that the duality  pairing
between $V$ and $V^\prime$ agrees with the  inner product of $H$.

Moreover, we set $D(A)=\mathbb{H}^{2,2}\cap V$ and we define the linear operator
$A: D(A)\subset H\longrightarrow H$ as $Au=-P \Delta u$, where $P: \mathbb{L}^2\to H$ is the orthogonal projection called the
Leray-Helmholtz projection. It is known, see \cite{Cattabriga_1961}, that  $A$ is  self-adjoint, positive and has a compact inverse.

The fractional powers of the operator $A$ will be denoted by $A^\alpha$, $\alpha \in \mathbb{R}$.
It is known that  $V$ coincides with $D(A^{1/2})$ with equivalent norms and in what follow  we can use on  $V$  the norm
$\Vert u\Vert = |A^{1/2} u|  =\sqrt{\int_D \vert \nabla u(x) \vert^2\, dx}$.

Let $b(\cdot,\cdot,\cdot): V\times V\times V\longrightarrow
\mathbb{R}$ be the continuous trilinear form defined by
$$b(u,v,z)=\int_{D}(u\cdot\nabla v)\cdot z,$$
which, by the incompressibility condition satisfies
$$b(u,v,v)=0, \;\; u,v\in V.$$
By the Riesz Lemma  there exists a continuous bilinear map
$B: V\times V\longrightarrow V^\prime$ such that
\begin{equation}\label{eqB}
\lb B(u,v),z\rangle =b(u,v,z),\ {\rm for}\ {\rm all}\ u,v,z\in V,
\end{equation}
which also satisfies
\begin{equation}\label{skew}
 \lb B(u,v),z \rb =-\lb B(u,z),v \rb \quad {\rm and}\quad \lb B(u,v),v \rb =0\quad  u, v, z\in V.
\end{equation}
Moreover, as it has been pointed out by V. Barbu \cite{Barbu_2011_private} and proved in \cite[Proposition 2.2]{Sri+Sundar_2006} the following Assumption  is  satisfied with the space $\mathrm{X}=\mathbb{L}^4(D)$.

\begin{assumption}\label{ass-Barbu}
\begin{enumerate}
\item There exists a Banach space $\mathrm{X}$ such that $V\subset\mathrm{X}\subset H$ continuously and densely  and there
exists a constant $C>0$ such that
\begin{equation}\label{inter}
\vert u\vert^{2}_{\mathrm{X}}\leq C|u|\Vert u\Vert, \quad u\in V.
\end{equation}
\item For $\eta>0$, there exists a constant $C_{\eta}>0$ such that for all $u_i \in V$, $i=1,2,3$,
\begin{align} \label{B0}
|\lb B(u_{1}, u_{2}), u_{3}\rb|&\leq  C \vert  u_1\vert_\mathrm{X} \vert  u_2\vert_\mathrm{X} \Vert  u_3\Vert.
\end{align}
 \end{enumerate}
\end{assumption}

\begin{remark}
It is easy to see  the Assumption  \ref{ass-Barbu} (2)   implies that for any $\eta>0$ there exists $C_{\eta}>0$ such that for all $u_i \in V$, $i=1,2,3$,
\begin{align}\label{B1}
|\lb B(u_{1}, u_{2}), u_{3}\rb|&\leq  \eta \Vert  u_{3}\Vert ^{2}
+C_{\eta}\vert u_{1}\vert^{2}_{\mathrm{X}}\vert u_{2}\vert^{2}_{\mathrm{X}},
\end{align}
Moreover, the last property  together with \eqref{skew} implies that for all $(u_i)_{i=1}^3 \in V$,
\begin{equation}\label{Blip}
|\lb B(u_{1}, u_{1})- B(u_{2}, u_{2}), u_{1}-u_{2}\rb|\leq \eta\Vert u_{1}-u_{2}\Vert ^{2}
+C_{\eta}|u_{1}-u_{2}|^{2}\vert u_{1}\vert^{4}_{\mathrm{X}}.
\end{equation}
\end{remark}

Note that this abstract setting (where one only assumes that the operators $A$ and $B$ satisfy the conditions in subsection \eqref{skew} -
\eqref{Blip})
 includes several classical hydrodynamical settings subject to random perturbations, such as the  2D Navier-Stokes Equations,
 the 2D Boussinesq model for B\'enard convection,  the 2D Magneto-Hydrodynamics equations,  the 2D magnetic B\'enard problem,
  the 3D  Leray $\alpha$-model for Navier-Stokes Equations,
 the GOY and Sabra shell models of turbulence, and dyadic models; see \cite{ChuMill} for more details.
  To be precise we assume that
  \begin{trivlist}
  \item[(i)]
  $(H,|.|)$ is a Hilbert space,
  \item[(ii)]
  $A$ is a linear positive unbounded operator in $H$,
  \item[(iii)] $V= D(A^{1/2})$
   endowed with  the norm $\Vert \cdot \Vert $ is a Hilbert space,
   \item[(iv)] a bilinear map $B:V\times V \to V^\prime$ satisfies \eqref{skew}
  \item[(v)]  Assumption \ref{ass-Barbu} holds
  for some Banach space $\mathrm{X}$.
  \end{trivlist}
  In the sequel, we will work in this abstract framework containing the above example of the 2D Navier-Stokes Equations.

We assume that $K$
is a    separable Hilbert space,  $(\Omega, \mathcal{F}, (\mathcal{F})_{t\geq 0}),
\mathbb{P})$ is a filtered probability space and $W=(W(t))_{t\geq 0}$ is a
$K$-cylindrical Wiener process on that probability space.

Hence, the stochastic hydrodynamical systems (including the stochastic 2D Navier-Stokes Equations) are rewritten in the abstract form

 \[\left\{
\begin{array}{ll}
du+(Au+B(u,u)-f)dt=G(u)dW,\\
u(0)=u_{0}.
\end{array}
\right.
\]
For simplicity we will assume that $f=0$.

 We also introduce a Coriolis type of term $R: [0,T]\times H\longrightarrow H$, for example
$R(t, (u_1,u_2))=c_0 (-u_2,u_1)$ (precise assumptions are given below), and  set for $t\in [0,T]$ and $u\in V$:
 $$F(t,u)=Au+B(u,u)+R(t,u)$$
and consider the evolution equation.

\begin{equation}\label{SNS}
d u(t) + F(t,u(t)) dt = G(u(t)) \,dW(t), \quad u(0)=u_0.
\end{equation}

\subsection{Assumptions and  results on the stochastic NSEs in $H$.} \label{subsec-G1R1}
Through the paper, we will assume that  $u_{0}\in H$ (or $u_{0}\in
L^{2}(\Omega,\mathcal{F}_{0}, H)$). Let us denote by ${\mathcal T}_{2}(K,H)$ the space of Hilbert-Schmidt operators from $K$ to $H$.

{\bf Assumption (G1)}: Let us assume that $G$ is a continuous mapping
$G:[0,T]\times V\longmapsto {\mathcal T}_{2}(K,H),$  (resp $G:[0,T]\times H\longmapsto {\mathcal T}_{2}(K,H)$ for $\eps =0$)
 and that there exist positive constants $(K_{i})_{i=0}^3$ and $(L_i)_{i=1}^2$,
 such that for any $t\in [0,T]$, $u,u_1,u_2\in V$,
\begin{eqnarray}\label{assumptionG1}
|G(t,u)|^{2}_{{\mathcal T}_{2}(K,H)}&\leq& K_{0}+K_{1}|u|^{2} + \eps \; K_2 \Vert u\Vert ^2,  \\
\label{assumptionG2}
|G(t,u_{2})-G(t,u_{1})|^{2}_{{\mathcal T}_{2}(K,H)}&\leq & L_{1}|u_{2}-u_{1}|^{2} +  \eps\;  L_2 \Vert u_2 - u_1\Vert ^2.
\end{eqnarray}

{\bf Assumption (R1)}: Let us assume that $R$ is a continuous mapping
$R: [0,T]\times H\longrightarrow H$ such that for some positive constants  $R_{0}$ and $R_{1}$
\begin{equation}\label{R}
|R(t,0)|\leq R_{0},\quad |R(t,u)-R(t,v)|\leq R_{1}|u-v|,\quad u, v\in H.
\end{equation}

There is huge literature for the well posedness of 2D Stochastic Navier-Sokes equations. The following result is similar to that
proved in \cite{CarPro} for the 2D Navier-Stokes Equations.   We refer to
\cite{ChuMill}  for the result given below. Indeed, the a priori estimates in Proposition A.2 of \cite{ChuMill}  imply that the sequence
$u_{n,h}$ of Galerkin approximations satisfies $\sup_n \sup_{h\in {\mathcal A}_M }
\mathbb{E}\big( \sup_{t\in [0,T]} |u_{n,h}(t)|^{2p}\big) <\infty$ and the subsequence used in Step 1 of the proof of Theorem 2.4 in \cite{ChuMill}
can be supposed to be weak-star convergent to $u_h$ in $L^{2p}(\Omega , L^\infty(0,T;H))$.   In our case, there is no random
control, that is $h=0$.

\begin{theorem}\label{thm-nse}
Let us assume that  assumptions {\bf (G1)} and {\bf (R1)} are satisfied with  $ K_2 \leq L_2 <2$.
Then, for any $T>0$ and  any ${\mathcal F}_0$-measurable $H$-valued random variable such that  $\mathbb{E}|u_0|^4<\infty$,
there exists  a unique adapted process  $u$ such that
$$u\in C([0,T]; H)\cap L^2(0,T;V) \cap L^{4}(0,T ; \mathrm{X})\quad {\rm  a.s.}$$
and $P$-a.s., $u$ is solution of the equation \eqref{SNS}, that is in the weak formulation, for all $t\in [0,T]$ and $\phi\in D(A)$:
\begin{equation}\label{weak}
 \begin{split}
 \lb u(t), \phi\rb&+\int_{0}^{t}\lb u(s), A\phi\rb\,ds +
 \int_{0}^{t}\lb B(u(s), u(s)) , \phi  \rb\,ds +
 \int_{0}^{t}\lb R(s,u(s)), \phi\rb\,ds \\
 &=\lb u_{0}, \phi\rb+
 \int_{0}^{t}\lb G(s,u(s))dW, \phi\rb,
 \end{split}
 \end{equation}
Moreover, if  $q\in [2,1+\frac{2}{K_2})$,  then  there exists a positive constant $C=C_q(T)$
 such that if $\mathbb{E}|u_0|^{q}<\infty$,
\begin{equation}\label{thm-estimate}
 \mathbb{E}\Big(\sup_{t\in [0,T] }|u(t)|^{q} +\int_{0}^{T}\! \Vert u(s)\Vert ^{2}(1+|u(s)|^{q-2})\,ds \Big) \leq C(1+\mathbb{E} |u_{0}|^{q}).
\end{equation}

Finally, if  $K_2<\frac23$, then
\begin{equation}\label{thm-estimate4}
 \mathbb{E} \int_{0}^{T}\! \vert u(s)\vert_{\mathrm{X}}^{4}\,ds
  \leq C(1+\mathbb{E} |u_{0}|^{4}).
\end{equation}

\end{theorem}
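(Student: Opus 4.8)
The plan is to obtain existence and uniqueness from the Galerkin scheme underlying \cite{ChuMill} and to concentrate on the two a priori bounds, since \eqref{thm-estimate4} will follow from \eqref{thm-estimate} with $q=4$ together with \eqref{inter}. For existence I would project \eqref{SNS} onto the span of the first $n$ eigenvectors of $A$, solve the resulting finite-dimensional It\^o SDE, prove the bounds below uniformly in $n$, and pass to the limit by the compactness/martingale argument of \cite{ChuMill}. Uniqueness in this two-dimensional-type framework follows by applying It\^o's formula to $|u_1-u_2|^2$ for two solutions $u_1,u_2$: the nonlinear difference is controlled by \eqref{Blip} as $\eta\Vert u_1-u_2\Vert^2+C_\eta|u_1-u_2|^2\,|u_1|_{\mathrm X}^4$, the Coriolis and noise differences by \eqref{R} and \eqref{assumptionG2} (their $\Vert u_1-u_2\Vert^2$ parts being absorbed by the dissipation since $L_2<2$), and one closes a pathwise Gronwall estimate with the random weight $|u_1|_{\mathrm X}^4$, which is a.s.\ integrable because $u_1\in L^4(0,T;\mathrm X)$.

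The heart of the matter is It\^o's formula applied to $\varphi(u)=|u|^q$. The drift produces $-q|u|^{q-2}\Vert u\Vert^2$ from $\langle Au,u\rangle=\Vert u\Vert^2$, while the cubic term drops out since $\langle B(u,u),u\rangle=0$ by \eqref{skew}, and the Coriolis term is linear via \eqref{R}. Bounding the It\^o correction through $|G(u)^\ast u|_K^2\le|u|^2\,|G(u)|^2_{{\mathcal T}_2(K,H)}$ and \eqref{assumptionG1}, it is dominated by $\tfrac{q(q-1)}{2}|u|^{q-2}\bigl(K_0+K_1|u|^2+K_2\Vert u\Vert^2\bigr)$. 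The decisive \emph{stochastic parabolicity} point is that the two $|u|^{q-2}\Vert u\Vert^2$ contributions combine into $-q\bigl(1-\tfrac{q-1}{2}K_2\bigr)|u|^{q-2}\Vert u\Vert^2$, whose coefficient is strictly positive exactly when $q<1+2/K_2$; all remaining drift terms are $O(1+|u|^q)$.

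Writing $\delta_q:=1-\tfrac{q-1}{2}K_2>0$, I would first take expectations after localizing by stopping times to remove the stochastic integral, giving $\EE|u(t)|^q+q\delta_q\,\EE\!\int_0^t|u|^{q-2}\Vert u\Vert^2\,ds\le\EE|u_0|^q+C\!\int_0^t\bigl(1+\EE|u(s)|^q\bigr)ds$, and apply Gronwall to bound $\sup_t\EE|u(t)|^q$ and hence $\EE\!\int_0^T|u|^{q-2}\Vert u\Vert^2\,ds$ by $C(1+\EE|u_0|^q)$. I would then make a second pass to put the supremum inside the expectation: the running maximum of the stochastic integral is estimated by the Burkholder-Davis-Gundy inequality, whose bracket is dominated by $\int_0^T|u|^{2(q-1)}|G(u)|^2\,ds$; splitting off the $\Vert u\Vert^2$-part as $\sup_t|u|^q\cdot\int_0^T|u|^{q-2}\Vert u\Vert^2\,ds$ and the rest as $\sup_t|u|^q\cdot\int_0^T(1+|u|^q)\,ds$, Young's inequality absorbs a fraction of $\EE\sup_t|u|^q$ into the left-hand side while the surviving integral expectations are exactly those already bounded in the first pass.

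Finally \eqref{thm-estimate4} is immediate, since $K_2<2/3$ is equivalent to $1+2/K_2>4$, so $q=4$ is admissible in \eqref{thm-estimate}; then \eqref{inter} gives $|u|_{\mathrm X}^4\le C|u|^2\Vert u\Vert^2=C|u|^{q-2}\Vert u\Vert^2$ with $q=4$, and integrating and taking expectations reduces the claim to the already-established bound on $\EE\!\int_0^T\Vert u\Vert^2(1+|u|^2)\,ds$. I expect the main obstacle to be the stochastic-parabolicity bookkeeping of the second paragraph: one must track the It\^o-correction constant $\tfrac{q(q-1)}{2}$ sharply enough that the net dissipation survives precisely under $q<1+2/K_2$, and arrange the Burkholder-Davis-Gundy step so that the residual $\Vert u\Vert^2$-integral it generates coincides with the quantity already controlled in the no-supremum pass, rather than a new uncontrolled term.
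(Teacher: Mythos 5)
Your proposal is correct and follows essentially the same route as the paper, which gives no self-contained proof of this theorem but defers to the Galerkin-approximation and a priori estimate arguments of \cite{ChuMill} and \cite{CarPro}; your It\^o computation for $|u|^q$, with the net dissipation coefficient $q\bigl(1-\tfrac{q-1}{2}K_2\bigr)$ surviving exactly when $q<1+\tfrac{2}{K_2}$, together with the localization/Gronwall first pass and the Burkholder--Davis--Gundy second pass, is precisely the standard estimate those references establish. The deduction of \eqref{thm-estimate4} from \eqref{inter} and the admissibility of $q=4$ under $K_2<\tfrac23$ is likewise the intended argument.
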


\subsection{Description of the scheme}\label{desscheme}

Let $\Pi=\left\{ 0=t_{0}<t_{1}<\dots<t_{n}=T\right\}$ be a finite
partition of a given interval $[0,T]$ with constant mesh $h=T/n$. We will consider the following splitting scheme similar to one
introduced in \cite{BGR_92} for deterministic NSEs.
Let $ \eps\; \in [0,1)$ and let $F_\eps : [0,T]\times V \to V^\prime$ be defined by:
\begin{equation} \label{Fepsilon}
F_\eps (t,u) = (1-\eps) Au + B(u,u) + R(t,u).
\end{equation}
Note that $F_0=F$.

{\bf Step 1.}
  Set $t_{-1}=-\frac{T}{n}$.  For $t \in [ t_{-1},0)$ set $y^n(t) = u^n(t) = u_0$  and
 ${\mathcal F}_t={\mathcal F}_0$ .

The scheme $(y^n, u^n) $ is defined by induction as follows. Let $i=0, \cdots n-1$ and  suppose we have defined processes
$u^n(t)$ and $y^n(t)$ for $t\in [t_{i-1}, t_{i})$ such that
$y^n(t_{i}^{-})$ is an $H$-valued $\mathcal{F}_{t_{i}}$-measurable
function. This clearly holds for $i=0$. Then we define $u^n(t),\ \ t\in [t_{i}, t_{i+1})$ as
the unique solution of the (deterministic) problem with positive viscosity $1-\eps$  and
with "initial" condition $ y^n(t_{i}^{-})$ at time $t_i$, that is :
\begin{equation}\label{deterministic-inductive}
\begin{array}{ll}
\frac{d}{dt}u^n(t) + F_{\eps}(t, u^n(t))=0, \; t\in [t_i, t_{i+1}), \quad
u^n(t_{i}^{+})=y^n(t_{i}^{-}).
\end{array}
\end{equation}
Note that  $u^n(t_{i+1}^{-})$ is a well defined
$H$-valued $\mathcal{F}_{t_{i}}$ measurable random variable. Thus we can
define $y^n(t),\ \ t\in [t_{i}, t_{i+1})$ as the unique solution
of the (random) problem with "initial" condition the ${\mathcal F}_{t_i}$-measurable random variable
$u^n(t_{i+1}^-)$ at time $t_i$:
\begin{equation} \label{stochastic-inductive}
dy^n(t)+ \eps\; A y^n(t)dt=G(t,y^n(t))\,dW(t), \;  t\in [t_{i},t_{i+1}),
\quad y^n(t_{i}^{+})=u^n(t_{i+1}^{-}).
\end{equation}
Indeed, if $ \eps\; >0$,  it is classical that, provided that the stochastic parabolicity condition holds
(that is $ \eps\; K_2$, $ \eps\; L_2$ are small enough), there exists a unique  weak solution to \eqref{stochastic-inductive},
see e.g. \cite{KrRo}. If $ \eps\; =0$,
the smoothing effect of $A$ does not act anymore, but the coefficient $G$ satisfies the usual growth and Lipschitz conditions for the $H$-norm.
Therefore, in both cases  $y^n(t_{i+1}^{-})$ is a well defined $H$-valued
$\mathcal{F}_{t_{i+1}}$ measurable random variable.

Finally, let
  $ u^n(T^+)=y^n(T^-)$.
\begin{remark}
As in \cite{BGR_92}, we notice that the processes $u^n$ and  $y^n$  are not continuous.
\end{remark}

In order to prove the convergence of the above scheme, we will need to establish a priori estimates on $u^n$ and $y^n$.
They will be different if $ \eps\;=0$ and $ \eps\; \in (0,1)$. We at first introduce some notation.
Recall that  $\Pi=\left\{ 0=t_{0}<t_{1}<\dots<t_{n}=T\right\}$. Set
\[
\left\{ \begin{array}{lll}
d_{n}(t):=t_{i}, & d^{*}_{n}(t):=t_{i+1}& \mbox{\rm for } t\in [t_{i},t_{i+1}), \; i=0, 1,\dots, n-2, \\
d_{n}(t):=t_{n-1}&d^{*}_{n}(t):=t_{n}& \mbox{\rm for }  t\in [t_{n-1},t_{n}].
\end{array}
\right.
\]

With the above notations, the processes $u^n$ and $y^n$ can be rewritten as follows:
For every $t\in [0,T]$ we have:
\begin{align}\label{equality_u_2}
u^n(t)&=u_{0}- \int_{0}^{t}F_{\eps}(s,u^n(s))ds
+\int_{0}^{d_{n}(t)}\left[ - \eps\; A y^n(s)ds+G(s,y^n(s))dW(s)\right],\\
\label{equality_y_2}
y^n(t)&=u_{0}- \int_{0}^{d_{n}^{*}(t)}F_\eps(s,u^n(s))ds+\int_{0}^{t}
\left[ - \eps\; A y^n(s)ds+G(s,y^n(s))dW(s)\right].
\end{align}
\bigskip

\section{A priori estimates for the initial data in $H$ and general stochastic hydrodynamical systems} \label{sec3}

We at first suppose that the conditions {\bf (G1)} and {\bf (R1)} are satisfied.
\begin{lemma}\label{l1_S2}
Let $u_0$ be ${\mathcal F}_0$-measurable and such that $\mathbb{E}|u_0|^2<\infty$, and fix $ \eps\; \in [0,1)$.
Let Assumptions {\bf (G1)} and {\bf (R1)}
be satisfied with   $ K_2  \leq L_2 <2 $.
Then  there exists a positive constant $C$, depending on $\mathbb{E}|u_0|^2 , \,  \eps\;,\, T$
and the constants $  R_i$ and $K_i $ such that for every
integer $n\geq 1$:
\begin{equation}\label{E1_S2}
 \sup_{ t\in [0, T]} \mathbb{E}\Big(|y^n(t)|^{2} + \sup_{s\in [d_n(t), d^*_n(t)) }|u^n(s)|^{2}  \Big)
+  \mathbb{E} \int_{0}^{T}\Vert u^n(s)\Vert^{2}ds \leq C.
\end{equation}

Furthermore, if   $ \eps\;\in (0,1)$ we can  choose the constant $C$ such  that:
\begin{equation}\label{E3y_S2}
\sup_{n\in\mathbb{N}} \mathbb{E}\int_{0}^{T}\Vert y^n(s)\Vert^{2}ds\leq C.
\end{equation}
\end{lemma}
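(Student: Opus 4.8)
The statement is a uniform-in-$n$ a priori bound for the splitting scheme $(u^n, y^n)$, so the natural strategy is to run the energy method (Itô's formula on $|\cdot|^2$) on each subinterval $[t_i, t_{i+1})$ separately for the two steps, then sum and use a discrete Gronwall argument. First I would treat the deterministic step \eqref{deterministic-inductive}: multiplying by $u^n$ and using that $\langle B(u^n,u^n),u^n\rangle=0$ by \eqref{skew}, together with $\langle Au^n,u^n\rangle = \|u^n\|^2$, gives an energy identity of the form $\frac{d}{dt}|u^n|^2 + 2(1-\eps)\|u^n\|^2 = -2\langle R(t,u^n),u^n\rangle$. The Coriolis term is controlled by \eqref{R}, i.e. $|\langle R(t,u^n),u^n\rangle|\le (R_0+R_1|u^n|)|u^n|$, absorbed by Young's inequality. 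Integrating over $[t_i,t)$ yields, for $t\in[t_i,t_{i+1})$,
\begin{equation*}
|u^n(t)|^2 + 2(1-\eps)\int_{t_i}^{t}\|u^n(s)\|^2\,ds \le |y^n(t_i^-)|^2 + C\int_{t_i}^{t}(1+|u^n(s)|^2)\,ds.
\end{equation*}
Here I use the matching condition $u^n(t_i^+)=y^n(t_i^-)$, which is the key bookkeeping device linking the two steps.

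For the stochastic step \eqref{stochastic-inductive}, I would apply Itô's formula to $|y^n(t)|^2$ on $[t_i,t_{i+1})$. The drift $-\eps Ay^n$ contributes $-2\eps\|y^n\|^2$, the Itô correction contributes the trace $|G(s,y^n)|^2_{{\mathcal T}_2(K,H)}$ bounded via \eqref{assumptionG1} by $K_0+K_1|y^n|^2+\eps K_2\|y^n\|^2$, and the stochastic integral $2\langle y^n, G\,dW\rangle$ is a martingale. Taking expectations (the martingale drops out) and using the initial matching $y^n(t_i^+)=u^n(t_{i+1}^-)$ gives
\begin{equation*}
\mathbb{E}|y^n(t)|^2 + (2\eps-\eps K_2)\,\mathbb{E}\int_{t_i}^{t}\|y^n(s)\|^2\,ds \le \mathbb{E}|u^n(t_{i+1}^-)|^2 + C\int_{t_i}^{t}(1+\mathbb{E}|y^n(s)|^2)\,ds.
\end{equation*}
The coefficient $2\eps-\eps K_2 = \eps(2-K_2)$ is strictly positive precisely because of the stochastic parabolicity hypothesis $K_2<2$; this is what lets the viscous term dominate the Itô correction and is the source of the extra bound \eqref{E3y_S2} when $\eps>0$, whereas for $\eps=0$ there is no such gradient control on $y^n$ and only \eqref{E1_S2} survives.

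The main obstacle is the telescoping: I must chain the two inequalities across the subinterval boundaries so that the endpoint $|y^n(t_i^-)|^2$ feeding the next deterministic step is controlled by the quantity $\mathbb{E}|u^n(t_{i+1}^-)|^2$ coming out of the stochastic step, and vice versa, without the constants degenerating as $n\to\infty$ (equivalently $h\to 0$). Concretely I would define $\phi(t):=\mathbb{E}\big(|y^n(t)|^2 + \sup_{s\in[d_n(t),d^*_n(t))}|u^n(s)|^2\big)$ and show it satisfies an integral inequality $\phi(t)\le C + C\int_0^t \phi(s)\,ds$ with $C$ independent of $n$; the supremum over $u^n$ inside the interval is handled by taking the sup in the deterministic energy inequality before integrating, which is legitimate since that step is pathwise. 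Discrete/continuous Gronwall then yields the uniform bound \eqref{E1_S2}, and integrating the surviving viscous terms $\int_0^T\|u^n\|^2$ (from Step 1, using $(1-\eps)>0$) and, for $\eps>0$, $\int_0^T\|y^n\|^2$ (from Step 2, using $\eps(2-K_2)>0$) over all subintervals and summing gives the $L^2(0,T;V)$ bounds. The delicate point throughout is ensuring every constant depends only on $\eps, T, \mathbb{E}|u_0|^2$ and the structural constants $K_i, R_i$, and in particular that the Gronwall constant does not pick up a factor blowing up with the number of steps $n$.
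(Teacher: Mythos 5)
Your proposal is correct and follows essentially the same route as the paper: an energy identity for the deterministic step (using the antisymmetry of $B$ and Assumption {\bf (R1)}), the It\^o formula for the stochastic step with the It\^o correction absorbed via $K_2<2$, chaining through the matching conditions $u^n(t_i^+)=y^n(t_i^-)$ and $y^n(t_i^+)=u^n(t_{i+1}^-)$, and a Gronwall/induction argument whose constants stay bounded because the per-interval exponential factors multiply to $e^{aT}$ uniformly in $n$. The paper implements your ``single integral inequality for $\phi$'' as an explicit induction over subintervals with a geometric sum, which is the same bookkeeping in a slightly more pedestrian form; your final step of summing the surviving viscous terms to get the $L^2(0,T;V)$ bounds is exactly what the paper does.
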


\begin{proof}
Let $\alpha=2R_{0}+K_{0}$ and  $a=2(R_{0}+R_{1})+K_{1}$.
We use an induction argument to prove that for $l=-1, \cdots, n-1$,
\begin{equation}\label{boundHinduction}
 \mathbb{E} \Big( \sup_{t\in [t_l, t_{l+1})}  |u^n(t)|^2 \Big) +  \sup_{t\in [t_l, t_{l+1})}  \mathbb{E}|y^n(t)|^2 \leq
\mathbb{E} |u_{0}|^{2}{\rm e}^{\frac{(l+1)aT}{n}}+
\frac{\alpha T}{n}\sum_{j=1}^{l+1}{\rm e}^{\frac{j aT}{n}},
\end{equation}
where we use the convention  $\sum_{j=p}^{q}{\rm e}^{\frac{jK_{1}T}{N}}=0$ if $p>q$.
Note that \eqref{boundHinduction} clearly holds for $l=-1$. Assume that \eqref{boundHinduction}
holds for $l=-1, 0, \cdots, i-1$.
Take the scalar product of \eqref{deterministic-inductive} by $u^n$
and integrate over $(t_i,t]$ for $t\in [t_i, t_{i+1})$; the anti-symmetry property of $B$ and
Assumption {\bf (R1)} yield
\begin{align} \label{majouni}
|u^n(t)|^{2} &+ 2(1-\eps)\int_{t_{i}}^{t}
\Vert  u^n(s)\Vert ^2 \,ds =|y^n(t_{i}^{-})|^{2} \nonumber \\
&-2 \int_{t_{i}}^{t}\lb B(u^n(s),u^n(s)),u^n(s)\rb\,ds
+2 \int_{t_{i}}^{t}\lb R(s,u^n(s)),u^n(s)\rb\,ds\nonumber \\
\leq & |y^n(t_{i}^{-})|^{2} + 2 R_0 \frac{T}{n} + 2 \int_{t_i}^t (R_0+R_1) |u^n(s)|^2\,ds.
\end{align}

The It\^o Lemma  yields for $t\in [t_i, t_{i+1})$:
\begin{align}\label{majoyni}
\mathbb{E}|y^n&(t)|^{2}+ 2 \;\eps\; \mathbb{E}\int_{t_{i}}^{t}
\Vert  y^n(s)\Vert ^2 \,ds  =\mathbb{E}|u^n(t_{i+1}^{-})|^{2}
+2\mathbb{E}\int_{t_{i}}^{t}
|G(s,y^n(s))|^{2}_{{\mathcal T}_{2}(K,H)}ds\nonumber \\
&\leq \mathbb{E}|u^n(t_{i+1}^{-})|^{2}+\frac{K_{0}T}{n}+K_{1}\int_{t_{i}}^{t}
\mathbb{E}|y^n(s)|^{2}ds +  \eps\; K_2 \mathbb{E}\int_{t_{i}}^{t}
\Vert  y^n(s)\Vert ^2 \,ds  .
\end{align}
Since $K_2<2$ we may neglect the integrals of the $V$-norm in both inequalities \eqref{majouni} and \eqref{majoyni}.
Taking expected values in \eqref{majouni}, using the Gronwall Lemma and the induction hypothesis, we obtain:
\begin{align*}
  \mathbb{E}\Big( &\sup_{t_{i}\leq t <t_{i+1}} |u^n(t)|^{2}\Big)  \leq \left(\mathbb{E}|y^n(t_{i}^{-})|^{2}+\frac{2R_{0}T}{n}\right)
{\rm e}^{\frac{2(R_{0}+R_{1})T}{n}}\\
&\leq \mathbb{E} |u_{0}|^{2}{\rm e}^{\frac{aiT}{n}+ \frac{2(R_{0}+R_{1})T}{n}}
+\frac{2R_{0}T}{n}{\rm e}^{\frac{2(R_{0}+R_{1})T}{n}}
+\frac{\alpha T}{n}\sum_{j=1}^{i}{\rm e}^{\frac{j aT}{n}+\frac{2(R_{0}+R_{1})T}{n} } ,
\end{align*}
and
\begin{align*}
\sup_{t_{i}\leq t <t_{i+1}} & \mathbb{E}|y^n(t)|^{2}\leq \left(\mathbb{E}|u^n(t_{i+1})|^{2}+\frac{K_{0}T}{n}\right)
{\rm e}^{\frac{K_{1}T}{n}}\\
&\leq \mathbb{E} |u_{0}|^{2}{\rm e}^{\frac{iaT+ aT}{n} }+ \frac{2R_0 T}{n} e^{\frac{aT}{n}} + \frac{K_{0}T}{n}{\rm e}^{\frac{K_{1}T}{n}}
+\frac{\alpha T}{n} e^{\frac{aT}{n}} \sum_{j=1}^{i}{\rm e}^{\frac{j aT}{n}} .
\end{align*}
Since $\alpha\geq 2R_{0}$ and $a\geq 2(R_{0}+R_{1})$,  we deduce that the induction hypothesis \eqref{boundHinduction}
 holds true for $l=i+1$.  Hence we deduce that
\begin{align*}
\sup_{0\leq t< T} \mathbb{E}\Big( \sup_{d_n(t) \leq s < d^*_n(t)} |u^n(s)|^{2}\Big) & \vee
\left(\sup_{0\leq t<T}\mathbb{E}|y^n(t)|^{2}\right)\leq
|u_{0}|^{2}{\rm e}^{\frac{n aT}{n}}+
\frac{\alpha T}{n}\sum_{j=1}^{n}{\rm e}^{\frac{j aT}{n}}\\
&\leq \mathbb{E} |u_{0}|^{2}{\rm e}^{aT}+
\frac{\alpha T}{n}{\rm e}^{\frac{aT}{n}}
\frac{{\rm e}^{a T}-1}{{\rm e}^{\frac{a T}{n}}-1}\,
\leq \,  \mathbb{E}|u_{0}|^{2}{\rm e}^{a T}+
\frac{\alpha}{a}{\rm e}^{2a T}.
\end{align*}
This proves part of \eqref{E1_S2}. Using this last inequality in \eqref{majoyni}, and in \eqref{majouni} after taking expected values,
  yields for every $i=0,\cdots, n-1$:
\begin{align*}
\mathbb{E}| u^n(t_{i+1}^-)|^2 + 2(1-\eps) \mathbb{E}\int_{t_i}^{t_{i+1}} \Vert  u^n(s)\Vert ^2\,ds &\leq \mathbb{E}|y^n(t_{i}^{-})|^{2}
+\frac{CT}{n}, \\
\mathbb{E}|y^n(t_{i+1}^-)|^{2}+ (2-K_2)  \eps\; \mathbb{E}\int_{t_{i}}^{t_{i+1}}\Vert  y^n(s)\Vert ^2 \,ds &\leq \mathbb{E}|u^n(t_{i+1}^{-})|^{2}+\frac{CT}{n}.
\end{align*}
Adding all these inequalities from $i=0$ to $i=n-1$ concludes the proof of \eqref{E1_S2} and proves  \eqref{E3y_S2} when $ \eps\; >0$.
\end{proof}

As usual, we can use higher moments of the $H$-norms and deduce the following.
\begin{lemma}\label{l4_S2}
Fix $ \eps\; \in [0,1)$ and let Assumptions {\bf (G1)} and {\bf (R1)} be satisfied with  $L_2<2$ and $K_2<\frac{2}{2p-1}$
for some real number $p\geq 2$. Then
there exists a positive constant $C:=C( \eps\;,T)$ such that for every integer $n\geq 1$:
\begin{equation}\label{E4u_S2}
\sup_{t\in [0,T]}\Big[ \mathbb{E}\Big(\sup_{s\in [d_n(t),d^*_n(t))} |u^n(s)|^{2p}\Big) +\mathbb{E}|y^n(t)|^{2p}\Big]
+\mathbb{E}\int_{0}^{T}\Vert u^n(s)\Vert ^{2}|u^n(s)|^{2(p-1)}ds \leq C.
\end{equation}
Furthermore, if $ \eps\;\in (0,1)$ then $C$ can be chosen such that:
\begin{equation}\label{E4y_S2}
\sup_{n\geq 1} \mathbb{E}\int_{0}^{T}\Vert y^n(s)\Vert ^{2}|y^n(s)|^{2(p-1)}ds \leq C.
\end{equation}
\end{lemma}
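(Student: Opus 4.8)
The plan is to reproduce the inductive argument of Lemma~\ref{l1_S2} with the square $|\cdot|^2$ replaced by the power $\phi(\cdot)=|\cdot|^{2p}$, carrying out the same two-substep analysis on each interval $[t_i,t_{i+1})$. I would set up an induction over $l=-1,0,\dots,n-1$ for a geometric-type bound on the quantity $\mathbb{E}\big(\sup_{t\in[t_l,t_{l+1})}|u^n(t)|^{2p}\big)+\sup_{t\in[t_l,t_{l+1})}\mathbb{E}|y^n(t)|^{2p}$, the case $l=-1$ being trivial since there both processes equal $u_0$ (so $\mathbb{E}|u_0|^{2p}<\infty$ is the natural standing hypothesis inherited from the setting of Lemma~\ref{l1_S2}). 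The chaining $y^n(t_i^-)\to u^n(t_{i+1}^-)\to y^n(t_{i+1}^-)$ across the deterministic and stochastic substeps is precisely what propagates the bound from $l$ to $l+1$.

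For the deterministic substep \eqref{deterministic-inductive} I would differentiate $|u^n(t)|^{2p}$, which gives, pathwise, $\frac{d}{dt}|u^n|^{2p}+2p(1-\eps)|u^n|^{2(p-1)}\|u^n\|^2=-2p|u^n|^{2(p-1)}\lb B(u^n,u^n),u^n\rb-2p|u^n|^{2(p-1)}\lb R(t,u^n),u^n\rb$. The cubic term vanishes by \eqref{skew}, while \eqref{R} and Young's inequality bound the Coriolis term by $C(1+|u^n|^{2p})$; since $1-\eps>0$ the gradient term on the left is favourable and is kept. Integrating on $(t_i,t]$, applying Gronwall pathwise and then taking expectations controls $\mathbb{E}\big(\sup_{[t_i,t_{i+1})}|u^n|^{2p}\big)$ and the local integral $\mathbb{E}\int_{t_i}^{t_{i+1}}|u^n|^{2(p-1)}\|u^n\|^2\,ds$ in terms of $\mathbb{E}|y^n(t_i^-)|^{2p}$.

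The stochastic substep \eqref{stochastic-inductive} is where the hypothesis $K_2<\frac{2}{2p-1}$ is used, and this is the heart of the matter. Applying the It\^o formula to $\phi(y^n)=|y^n|^{2p}$ and taking expectations (the martingale part vanishes) produces the coercive drift $-2p\eps\,|y^n|^{2(p-1)}\|y^n\|^2$ together with the second order correction $\frac12\mathrm{Tr}\big[\phi''(y^n)G(s,y^n)G(s,y^n)^*\big]$. With $\phi''(x)=2p|x|^{2(p-1)}I+4p(p-1)|x|^{2(p-2)}\,x\otimes x$ and the bound $|G^*x|^2\le|G|^2_{{\mathcal T}_2(K,H)}|x|^2$, this correction is at most $p(2p-1)|y^n|^{2(p-1)}|G(s,y^n)|^2_{{\mathcal T}_2(K,H)}$, and \eqref{assumptionG1} converts its gradient part into $p(2p-1)\eps K_2|y^n|^{2(p-1)}\|y^n\|^2$. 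Since $(2p-1)K_2<2$, this is strictly dominated by the coercive drift, so a positive multiple $p\big(2-(2p-1)K_2\big)\eps$ of $\mathbb{E}\int_{t_i}^{t_{i+1}}|y^n|^{2(p-1)}\|y^n\|^2\,ds$ survives on the left; the $K_0,K_1$ contributions are absorbed by Young and Gronwall, yielding a bound on $\sup_{[t_i,t_{i+1})}\mathbb{E}|y^n|^{2p}$ and on the $V$-norm integral in terms of $\mathbb{E}|u^n(t_{i+1}^-)|^{2p}$.

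Combining the two substep estimates closes the induction with a constant $C(\eps,T)$ independent of $n$, exactly as in Lemma~\ref{l1_S2}; summing the local $V$-norm integral bounds over $i=0,\dots,n-1$ and inserting the now uniform control of the $2p$-th moments gives the global estimates \eqref{E4u_S2} and, when $\eps>0$, \eqref{E4y_S2}. The condition $L_2<2$ plays no role in the a priori estimate itself: together with $K_2<\frac23$ it ensures, via the stochastic parabolicity condition, that the weak solution of \eqref{stochastic-inductive} defining $y^n$ exists, so that the scheme is well posed. The only genuine obstacle is the bookkeeping of the constants $p(2p-1)$ against $2p$ in the It\^o correction, which is exactly what forces the threshold $K_2<\frac{2}{2p-1}$; everything else is a routine higher-moment repetition of Lemma~\ref{l1_S2}.
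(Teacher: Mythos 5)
Your proposal is correct and follows essentially the same route as the paper: the same two-substep induction chaining $y^n(t_i^-)\to u^n(t_{i+1}^-)\to y^n(t_{i+1}^-)$, the same chain-rule/Gronwall treatment of the deterministic step, and the same It\^o computation producing the factor $p(2p-1)$ in the correction term, which is exactly where the threshold $K_2<\frac{2}{2p-1}$ is used to retain a positive multiple of the $V$-norm integrals before summing over the subintervals. The only (correct) observation you add is that the standing moment hypothesis $\mathbb{E}|u_0|^{2p}<\infty$ is implicitly inherited and that $L_2<2$ only enters through well-posedness of the stochastic substep.
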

\begin{proof}
 By repeating an argument similar to that used in the proof of Lemma \ref{l1_S2} that we can find in 
 \cite{Temam_2001} (Lemma 1.2), that is:
\[  
\frac{d}{dt}|u^n(t)|^{2}=2\langle \left(u^n(t)\right)', u^n(t)\rangle, 
\]  
and using the chain rule,  we have that for $t\in [t_i, t_{i+1})$, $i=0, \cdots, n-1$: 
\begin{align*}
\frac{d}{dt}|u^n(t)|^{2p}&=
p\left(|u^n(t)|^{2}\right)^{p-1}\frac{d}{dt} |u^n(t)|^{2}\\
&=2p |u^n(t)|^{2(p-1)}\langle \left(u^n(t)\right)', u^n(t)\rangle .
\end{align*}

Hence, we get  that  for $i=0,\dots,n-1$ and for all $t\in [t_{i},t_{i+1})$
\begin{align}\label{u4}
|u^n(t)|^{2p} & + 2p (1-\eps) \int_{t_i}^t |u^n(s)|^{2(p-1)} \Vert u^n(s)\Vert ^2\,ds =| y^n(t_{i}^{-})|^{2p}\nonumber \\
& - 2p\  \int_{t_i}^t\left[ \lb B(u^n(s),u^n(s)),u^n(s)\rb + \lb R(s,u^n(s)),u^n(s)\rb \right]  |u^n(s)|^{2(p-1)}\,ds\nonumber \\
\leq  & |y^n(t_{i}^{-})|^{2p}
+2p\frac{R_{0}T}{n} +2p(R_{0}+R_{1})\int_{t_{i}}^{t}  |u^n(s)|^{2p}ds.
\end{align}
Thus, the Gronwall Lemma implies
\begin{equation*}
\mathbb{E}\sup_{t_{i}\leq t<t_{i+1}} |u^n(t)|^{2p}\leq\left[\mathbb{E}|y^n(t_{i}^{-})|^{2p}+2p\frac{R_{0}T}{n}\right]
{\rm e}^{2p(R_{0}+R_{1})\frac{T}{n}}.
\end{equation*}
On the other hand, for fixed $i=0,\dots,n-1$,  the It\^o Lemma yields for   $t\in [t_{i},t_{i+1})$
\begin{align*}
|&y^n(t)|^{2p} + 2p \;\eps\;\int_{t_{i}}^{t}|y^n(s)|^{2(p-1)}\Vert y^n(s)\Vert ^{2}\,ds =|u^n(t_{i+1}^{-})|^{2p} \\
& +p \int_{t_{i}}^{t}\lb G(s,y^n(s))dW(s),y^n(s)\rb |y^n(s)|^{2(p-1)}\\
&+p \int_{t_{i}}^{t} \! \Big( |G(s,y^n(s))|^{2}_{{\mathcal T}_{2}(K,H)} |y^n(s)|^{2(p-1)}
\! +2(p-1) |G^{*}(s,y^n(s)) y^n(s)   |^{2}_{K} |y^n(s)|^{2(p-2)} \Big)\,ds .
\end{align*}
Using assumption {\bf (G1)}, we deduce:
\begin{align}\label{y4}
 \mathbb{E}|& y^n (t)|^{2p}+2p  \;\eps\;\int_{t_{i}}^{t}\mathbb{E}|y^n(s)|^{2}\Vert y^n(s)\Vert ^{2(p-1) } \,ds \leq
\mathbb{E}|u^n(t_{i+1}^{-})|^{2p}\nonumber \\ &+p(2p-1) \Big[  \frac{K_{0}T}{n}
+(K_0+K_1) \int_{t_{i}}^{t}\mathbb{E}|y^n(s)|^{2p} ds +
 K_2  \;\eps \int_{t_{i}}^{t}\mathbb{E}|y^n(s)|^{2(p-1)} \Vert y^n(s)\Vert ^{2}\,ds\Big].
\end{align}
Since $K_2 < \frac{2}{2p-1}$, in inequality \eqref{y4} we may neglect the integrals containing  the $V$-norms of $u(t)$ and by applying
the Gronwall Lemma we then infer that
\begin{equation*}
\sup_{t_{i}\leq t<t_{i+1}}\mathbb{E}|y^n(t)|^{2p}\leq \left(\mathbb{E}|u^n(t_{i+1}^{-})|^{2p}+\frac{2p(p-1)K_{0}T}{n}\right)
{\rm e}^{\frac{2p(p-1)(K_{0}+K_{1})T}{n}}.
\end{equation*}
Let us put
$$b:=2p (R_{0}+R_{1})+p(2p-1) (K_{0}+K_{1})\; {\rm and}\; \beta:=2p R_{0}+p(2p-1)K_{1}; $$
then by a mathematical  induction argument (see  the proof of Lemma \ref{l1_S2} for a similar one), we infer that  for $i=0,\dots,n-1$,
\[
\mathbb{E} \Big( \sup_{t_{i}\leq t<t_{i+1}} |u^n(t)|^{2p}\Big) \; \vee \; \Big( \sup_{t_{i}\leq t<t_{i+1}}\mathbb{E}|y^n(t)|^{2p}\Big) \leq
\mathbb{E} |u_{0}|^{2p}{\rm e}^{(i+1)\frac{T}{n}b}+\frac{\beta T}{n}\sum_{j=1}^{i+1}{\rm e}^{(jb)\frac{T}{n}}.
\]
Hence we deduce:
\begin{equation}\label{uy4}
\Big( \sup_{0\leq t<  T}  \mathbb{E}\Big[ \sup_{s\in [d_n(t),d^*_n(t))} |u^n(s)|^{2p}\Big] \Big) \;  \vee \;  \Big( \sup_{0\leq t<  T}  \mathbb{E}|y^n(t)|^{2p}\Big)\leq
\mathbb{E} |u_{0}|^{2p}{\rm e}^{bT}+\frac{\beta }{b}{\rm e}^{2bT}.
\end{equation}
Adding \eqref{u4} and \eqref{y4} for $i=1,\dots,n-1$ and using \eqref{uy4}, we deduce that
\begin{align*}
\mathbb{E}|&y^n(T^{-})|^{2p} +p[2-(2p-1)K_2] \,  \eps\;\int_{0}^{T}\!\! \! \mathbb{E}|y^n(s)|^{2(p-1)}\Vert y^n(s)\Vert ^{2}ds
\\
&+  2p(1-\eps)\int_{0}^{T}\!\! \mathbb{E}|u^n(s)|^{2(p-1)}\Vert u^n(s)\Vert ^{2}ds  \leq
\mathbb{E} |u_{0}|^{2p} (1+bTe^{bT}) +\beta T(1+e^{2bT}).
\end{align*}
This completes the proof using once more the fact that $(2p-1)K_2<2$ when $ \eps\; >0$.
\end{proof}

 Finally we will   prove  an upper  estimate
 of the  $H$ norm of the difference of  both processes $u^n$ and $y^n$.
\begin{proposition}\label{prop_y-u}
Let us assume that $u_0$ is ${\mathcal F}_0$-measurable such that ${\mathbb E }|u_0|^4 <\infty$ and that the Assumptions {\bf (G1)} and {\bf (R1)} hold with $K_2<\frac23$  and $L_2<2$.
Then for any  $ \eps\;\in [0,1)$,  there exists a positive constant $C$ such that for any $n\in\mathbb{N}$
\begin{equation}\label{y-u}
\mathbb{E}\int_{0}^{T}|u^{n}(t)-y^{n}(t)|^{2}dt\leq \frac{CT}{n}.
\end{equation}
\end{proposition}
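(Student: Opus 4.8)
The plan is to start from the integral representations \eqref{equality_u_2}--\eqref{equality_y_2}. Subtracting them, for $t\in[t_i,t_{i+1})$ all contributions outside the current subinterval cancel and one is left with
\[
u^n(t)-y^n(t)=\int_t^{d_n^*(t)}F_\eps(s,u^n(s))\,ds+\eps\int_{d_n(t)}^t A y^n(s)\,ds-\int_{d_n(t)}^t G(s,y^n(s))\,dW(s)=:I_1(t)+I_2(t)+I_3(t),
\]
where each integral runs over a window of length at most $h=T/n$. I would estimate $\mathbb{E}\int_0^T|u^n-y^n|^2\,dt$ by testing this identity against $u^n(t)-y^n(t)$, integrating in $t$ and taking expectations, so that $\mathbb{E}\int_0^T|u^n-y^n|^2\,dt=\mathbb{E}\int_0^T\langle u^n-y^n,I_1+I_2\rangle\,dt+\mathbb{E}\int_0^T\langle u^n-y^n,I_3\rangle\,dt$.

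First I would dispose of the stochastic term $I_3$. By the It\^o isometry $\mathbb{E}|I_3(t)|^2=\mathbb{E}\int_{d_n(t)}^t|G(s,y^n(s))|^2_{\mathcal{T}_2(K,H)}\,ds$; integrating in $t$, using $d_n^*(t)-d_n(t)\le h$ and \eqref{assumptionG1} gives $\mathbb{E}\int_0^T|I_3|^2\,dt\le h\,\mathbb{E}\int_0^T\big(K_0+K_1|y^n|^2+\eps K_2\|y^n\|^2\big)\,ds\le Ch$, the bound coming from \eqref{E1_S2} and \eqref{E3y_S2}. The cross term is then handled by Cauchy--Schwarz and Young, $\mathbb{E}\int_0^T\langle u^n-y^n,I_3\rangle\,dt\le\tfrac12\mathbb{E}\int_0^T|u^n-y^n|^2\,dt+\tfrac12\mathbb{E}\int_0^T|I_3|^2\,dt$, the first summand being absorbed on the left.

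The heart of the matter is the drift--viscous part $\langle u^n-y^n,I_1+I_2\rangle$, and here the two regimes separate. When $\eps\in(0,1)$ the a priori bounds \eqref{E1_S2}, \eqref{E3y_S2} give $u^n,y^n\in L^2(0,T;V)$, hence $u^n-y^n\in V$ for a.e.\ $t$, and the viscous contributions pair as $\langle A^{1/2}(u^n-y^n),A^{1/2}u^n\rangle$ and $\langle A^{1/2}(u^n-y^n),A^{1/2}y^n\rangle$, while the bilinear and Coriolis parts of $F_\eps$ are bounded through \eqref{B0} and \eqref{R}. A Cauchy--Schwarz inequality in $(t,\omega)$, using that the inner integrals run over windows of length $\le h$ (so that $\big(\int_t^{d_n^*(t)}\|u^n\|\,ds\big)^2\le h\int_t^{d_n^*(t)}\|u^n\|^2\,ds$, and likewise for $|u^n|_{\mathrm{X}}^2$), reduces this to $\big(\mathbb{E}\int_0^T\|u^n-y^n\|^2\,dt\big)^{1/2}$ times a factor of order $h$; since $\mathbb{E}\int_0^T\|u^n-y^n\|^2\,dt\le C$ and $\mathbb{E}\int_0^T|u^n|_{\mathrm{X}}^4\,dt\le C$ (Lemma \ref{l4_S2} with $p=2$, legitimate as $K_2<\tfrac23$), this gives $\mathbb{E}\int_0^T\langle u^n-y^n,I_1+I_2\rangle\,dt\le Ch$, and collecting the pieces yields the claim for $\eps>0$.

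When $\eps=0$ this pairing breaks down, since $y^n$ solves $dy^n=G\,dW$ and carries no $V$-regularity, so $\langle y^n,Au^n\rangle$ is meaningless. Instead I would use that now $I_2\equiv0$ and, by \eqref{deterministic-inductive}, $I_1(t)=u^n(t)-u^n(d_n^*(t)^-)$ is a genuine $H$-valued increment of $u^n$. As $u^n(t)$ and $u^n(d_n^*(t)^-)$ are $\mathcal{F}_{d_n(t)}$-measurable while $I_3(t)$ is a martingale increment, $\mathbb{E}\langle I_1,I_3\rangle=0$, so the problem reduces to the deterministic time-increment bound $\mathbb{E}\int_0^T|u^n(t)-u^n(d_n^*(t)^-)|^2\,dt\le Ch$, which I would attack by interpolation $H=[V,V']_{1/2}$: the bound $\|\tfrac{d}{ds}u^n\|_{V'}\le\|u^n\|+C|u^n|_{\mathrm{X}}^2+C(1+|u^n|)$ gives the $V'$-increment at order $h$, while the $V$-factor is fed by $\mathbb{E}\int_0^T\|u^n\|^2\,dt\le C$. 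This last estimate is exactly the step I expect to be the main obstacle: the viscous term $Au^n$ lives only in $V'$, and squeezing the $H$-increment to first order in $h$ forces control of the endpoint enstrophy $\mathbb{E}\int_0^T\|u^n(d_n^*(t)^-)\|^2\,dt=h\,\mathbb{E}\sum_i\|u^n(t_{i+1}^-)\|^2$, which is not delivered by the energy bound alone but must be extracted from the parabolic smoothing of the deterministic step together with the higher moments of Lemma \ref{l4_S2}, all while keeping the constant independent of $n$.
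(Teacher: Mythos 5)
Your reduction is sound up to the last step: the orthogonality $\mathbb{E}\lb I_1(t),I_3(t)\rb=0$ when $\eps=0$ is correct (and slightly cleaner than the triangle-inequality splitting the paper uses), the treatment of $I_3$ via the It\^o isometry matches the paper's, and your handling of the case $\eps\in(0,1)$ by pairing against $u^n(t)-y^n(t)$ and using Cauchy--Schwarz in $(t,\omega)$ together with $\mathbb{E}\int_0^T\Vert u^n-y^n\Vert^2\,dt\le C$ (from \eqref{E1_S2} and \eqref{E3y_S2}) is a legitimate variant of the paper's Case 2, which instead pairs the increments against $y^n(s)-u^n(s)$ at the inner time and exploits the monotonicity inequalities \eqref{ineq-new01}--\eqref{ineq-new02}. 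The genuine gap is in your $\eps=0$ argument for $\mathbb{E}\int_0^T|u^n(t)-u^n(d_n^*(t)^-)|^2\,dt\le CT/n$, i.e.\ the estimate \eqref{u-u_H}. The interpolation $|w|^2\le\Vert w\Vert_V\,\Vert w\Vert_{V'}$ forces you to bound $\Vert u^n(t)-u^n(d_n^*(t)^-)\Vert\le\Vert u^n(t)\Vert+\Vert u^n(d_n^*(t)^-)\Vert$ and hence, after Cauchy--Schwarz, to control $\mathbb{E}\int_0^T\Vert u^n(d_n^*(t)^-)\Vert^2\,dt=\frac{T}{n}\sum_i\mathbb{E}\Vert u^n(t_{i+1}^-)\Vert^2$. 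Nothing in section \ref{sec3} controls these grid-point enstrophies: Lemmas \ref{l1_S2} and \ref{l4_S2} give only $\mathbb{E}\int_0^T\Vert u^n\Vert^2\,dt\le C$ and moments of the $H$-norm, and the initial data of each deterministic step, $y^n(t_i^-)$, lies merely in $H$. The parabolic-smoothing route you sketch would give at best $\frac{T}{n}\Vert u^n(t_{i+1}^-)\Vert^2\le C\,e^{c\int_{t_i}^{t_{i+1}}\vert u^n\vert_{\mathrm{X}}^4\,ds}\int_{t_i}^{t_{i+1}}\Vert u^n\Vert^2\,ds$, since the enstrophy balance brings in $\lb B(u^n,u^n),Au^n\rb$, which does not vanish in the Dirichlet/abstract setting of this proposition; summing and taking expectations would then require exponential moments of $\int_0^T\vert u^n\vert_{\mathrm{X}}^4\,ds$, which are not available (Lemma \ref{l4_S2} with $p=2$ only yields $\mathbb{E}\int_0^T\vert u^n\vert_{\mathrm{X}}^4\,ds\le C$).

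The paper closes this step without any endpoint enstrophy. Writing $|u^n(d_n^*(t)^-)-u^n(t)|^2=2\int_t^{d_n^*(t)}\lb u^n(s)-u^n(t),du^n(s)\rb$ via the energy identity, the viscous contribution becomes $\int_t^{d_n^*(t)}\big[-2\Vert u^n(s)\Vert^2+2\lb A^{1/2}u^n(s),A^{1/2}u^n(t)\rb\big]\,ds\le\frac12\int_t^{d_n^*(t)}\Vert u^n(t)\Vert^2\,ds$: the negative square $-2\Vert u^n(s)\Vert^2$ absorbs the cross term by Young's inequality, and what survives is $\Vert u^n(t)\Vert^2$ at the \emph{outer} time multiplied by a window of length $\le T/n$, whose expectation integrates to $CT/n$ using only $\mathbb{E}\int_0^T\Vert u^n\Vert^2\,dt\le C$. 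The $B$ and $R$ contributions are then handled by Fubini and Cauchy--Schwarz exactly as you propose for the other pieces. Replacing your interpolation step by this expansion of the squared increment repairs the proof.
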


\begin{proof}
{\bf Case 1:}
Let  $ \eps\; =0$; then  \eqref{stochastic-inductive} and Assumption {\bf (G1)} prove that for any $t\in [0,T)$,
\[
\mathbb{E}|y^n(t)-u^n(d^*_n(t))|^2 = \mathbb{E}\int_{d_n(t)}^t \Vert  G(s,y^n(s))\Vert _{\mathcal{T}_2(K,H)}^2\,ds \leq \mathbb{E}\int_{d_n(t)}^t [K_0 + K_1 |y^n(s)|^2]\,ds.
\]
Therefore, Fubini's theorem and \eqref{E1_S2} yield
\begin{eqnarray}\label{y-u*}
 \mathbb{E}\int_0^T |y^n(t)-u^n(d^*_n(t))|^2 dt &\leq &
 C\mathbb{E}\int_0^T\,ds [1+|y^n(s)|^2] \int_s^{d^*_n(s)} dt \nonumber \\
&\leq & C \frac{T}{n}.
\end{eqnarray}

We next prove that for any $ \eps\; \in [0,1)$, we have
\begin{equation}\label{u-u_H}
\mathbb{E}\int_{0}^{T}|u^{n}(d_{n}^{*}(t)^{-})-u^{n}(t)|^{2}dt\leq \frac{CT}{n}.
\end{equation}
This estimate together with \eqref{y-u*} concludes the proof of \eqref{y-u} when $ \eps\; =0$.
The evolution equation \eqref{equality_u_2} shows that
\begin{equation*}
|u^{n}(d_{n}^{*}(t)^{-})-u^{n}(t)|^{2}
=2\int_{t}^{d_{n}^{*}(t)}\lb u^{n}(s)-u^{n}(t), du^{n}(s)\rb=
\sum_{i=1}^{3}T_{i}(t),
\end{equation*}
where
\begin{align*}
T_{1}(t)&=-2(1-\eps)\int_{t}^{d_{n}^{*}(t)}\lb  Au^{n}(s), u^{n}(s)-u^{n}(t)\rb\,ds, \\
T_{2}(t)&=-2\int_{t}^{d_{n}^{*}(t)}\lb  B(u^{n}(s),u^{n}(s)) ,u^{n}(s)-u^{n}(t)\rb\,ds,\\
T_{3}(t)&=-2\int_{t}^{d^*_{n}(t)}\lb  R(s,u^{n}(s)), u^{n}(s)-u^{n}(t)\rb\,ds.
\end{align*}

Using Lemma \ref{l1_S2} and the Young inequality, we deduce that
\begin{align*}
\left | \mathbb{E} \int_{0}^{T}T_{1}(t)dt\right | &=\left | (1-\eps)\mathbb{E} \int_{0}^{T}dt \int_{t}^{d_{n}^{*}(t)}
\left [-2\Vert u^{n}(s)\Vert ^{2}+2\Vert u^{n}(s)\Vert \Vert u^{n}(t)\Vert  \right ]ds \right |\\
&\leq \left | (1-\eps)\mathbb{E} \int_{0}^{T}dt \int_{t}^{d_{n}^{*}(t)}
\left [-2\Vert u^{n}(s)\Vert ^{2}+2\Vert u^{n}(s)\Vert ^{2}+ \frac{1}{2}\Vert u^{n}(t)\Vert ^{2} \right ]ds \right |\\
&\leq  \frac{1-\eps}{2}
\mathbb{E} \int_{0}^{T}dt \Vert u^{n}(t)\Vert ^{2}\int_{t}^{d_{n}^{*}(t)}ds
\; \leq\;  \frac{CT}{n}.
\end{align*}

Furthermore, using the upper estimate \eqref{B0},
the Cauchy-Schwarz inequality, the Fubini Theorem,
 Lemmas  \ref{l1_S2} and \ref{l4_S2},   and \eqref{inter}, we deduce
\begin{align*}
\Big| \mathbb{E} \int_{0}^{T}T_{2}(t)dt\Big | &\leq 2\int_{0}^{T}dt \int_{t}^{d_{n}^{*}(t)}
| u^{n}(s)|_{\mathrm{X}}^{2}\Vert u^{n}(t)\Vert\,ds\\
&\leq 2\Big (\mathbb{E}\int_{0}^{T}\Vert u^{n}(t)\Vert ^{2}dt\Big)^{1/2}
\Big(\mathbb{E}\int_{0}^{T}dt \Big[\int_{t}^{d^*_{n}(t)}\vert u^{n}(s)\vert^{2}_{\mathrm{X}}ds\Big]^{2}\Big)^{1/2}\\
&\leq C\Big(\mathbb{E}\int_{0}^{T}dt \frac{T}{n}\int_{t}^{d_{n}^{*}(t)}\vert u^{n}(s)\vert^{4}_{\mathrm{X}}ds\Big)^{1/2}\\
&\leq C \Big(\frac{T}{n} \; \mathbb{E}\int_{0}^{T}\,ds \vert u^{n}(s)\vert^{4}_{\mathrm{X}}\int_{d_n(s)}^s dt   \Big)^{1/2} \leq \frac{CT}{n}.
\end{align*}
Using Assumption {\bf (R1)}, the Cauchy-Schwarz inequality,  the Fubini Theorem and \eqref{E1_S2}, we have:
\begin{align*}
\Big| \mathbb{E} \int_{0}^{T} & T_{3}(t)dt\Big | \leq  C\mathbb{E}\int_{0}^{T}dt \int_{t}^{d_{n}^{*}(t)}
\left[ 1+|u^{n}(s)|^{2}\right ]ds\\
&+C\Big(\mathbb{E}\int_{0}^{T}|u^{n}(t)|^{2}dt\Big)^{1/2}
\Big (\mathbb{E}\int_{0}^{T}dt\Big[ \int_{t}^{d_{n}^{*}(t)}(1+|u^{n}(s)|)ds\Big]^{2}\Big )^{1/2}\\
&\leq \mathbb{E}\int_{0}^{T}\,ds[1+ |u^{n}(s)|^{2}] \int_{d_n(s)}^s dt
+C \Big(\mathbb{E}\int_{0}^{T}dt \frac{T}{n}\int_{t}^{d_{n}^{*}(t)}(1+|u^{n}(s)|)^{2}ds\Big)^{1/2}\\
&\leq \frac{CT}{n}+C \Big( \frac{T}{n}\mathbb{E}\int_{0}^{T}\,ds (1+|u^{n}(s)|)^{2} \int_{d_n(s)}^s dt \Big)^{1/2}
\; \leq \;  \frac{CT}{n}.
\end{align*}
This concludes the proof of \eqref{u-u_H} and hence of \eqref{y-u} when $ \eps\; =0$.

{\bf Case 2:} Suppose that $ \eps\; \in (0,1)$. Then for $t\in (0,T]$ we have
\[ y^n(t) - u^n(t) =- \int_t^{d^*_n(t)} \!\! F_\eps(s,u^n(s))\,ds -  \eps\; \int_{d_n(t)}^t \!A y^n(s)\,ds + \int_{d_n(t)}^t \!G(s,y^n(s)) dW(s).\]
Therefore, the It\^o Lemma  implies  that
\[  \mathbb{E}\int_0^T  |y^n(t) - u^n(t)|^2 dt  =  \sum_{i=1}^5 \bar{T}_i, \]
where
\begin{align*}
\bar{T}_1&=   2 (1-\eps) \mathbb{E}\int_0^T \!\!dt \int_t^{d^*_n(t)} \!\! \lb  A u^n(s) , y^n(s)- u^n(s)\rb \,ds,\\
\bar{T}_2&= 2 \mathbb{E}\int_0^T \!\!dt \int_t^{d^*_n(t)} \!\! \lb B(u^n(s),u^n(s)), y^n(s)-u^n(s)\rb\,ds,\\
\bar{T}_3&= 2  \mathbb{E}\int_0^T \!\!dt \int_t^{d^*_n(t)} \!\! \lb R(s, u^n(s)), y^n(s)-u^n(s)\rb\,ds, \\
 \bar{T}_4 &=  -2 \eps\;  \mathbb{E} \int_0^T \!\!dt \int_{d_n(t)}^t \! \lb A y^n(s), y^n(s)-u^n(s)\rb \,ds,\\
 \bar{T}_5& =  \mathbb{E}\int_0^T \!\!dt \int_{d_n(t)}^t \Vert G(s,y^n(s))\Vert _{\mathcal{T}_2(K,H)}^2\,ds.
\end{align*}

Let us note that since $A=A^\ast $ is non negative, we have, for all $y,u \in D(A)$:
\begin{eqnarray}\label{ineq-new01}
\langle  A u , y- u\rb &=&\lb  A (u-y) , y- u\rb+\lb  A y , y- u\rb \leq  \lb  A y , y- u\rb\, , \\
\nonumber
2 \langle A u , y- u\rb &\leq & \langle A u , y- u\rb  +\lb  A y , y- u\rb=\lb  A (y+u) , y- u\rb\\
\label{ineq-new02}
&=& \lb  A y , y\rb-\lb  A u , u\rb \leq \lb  A y , y\rb= \Vert y \Vert^2.
\end{eqnarray}

Therefore, the Fubini Theorem, the  Cauchy Schwarz inequality,
and the  estimate
\eqref{E3y_S2} yield
\[ \bar{T}_1 \leq (1-\eps) \mathbb{E}\int_0^T  ds \,\, \Vert y^n(s)\Vert ^2 \int_{d_n(s)}^s \,dt  \leq C\frac{1-\eps}{n}. \]
Similarly, the Cauchy Schwarz inequality and the upper estimates  \eqref{E1_S2} and
\eqref{E3y_S2} yield
\[ \bar{T}_4 \leq 2\;\eps \; {\mathbb E} \int_0^T\,ds \,  \big[ \,2\,\Vert y^n(s)^2 + \Vert u^n(s)\Vert ^2\big] \int_{d_n(s)}^s dt \leq  C\frac{\eps}{n}.\]
The Fubini Theorem and the upper estimates \eqref{B1} with $\eta=1$, \eqref{E1_S2},
 \eqref{E3y_S2}, \eqref{E4u_S2}, \eqref{B0}  and \eqref{inter} yield
\[ \bar{T}_2\leq C\mathbb{E}\int_0^T \,ds
\big[\,\Vert y^n(s) - u^n(s)\Vert ^2 + C_1 \vert u^n(s)\vert_\mathrm{X}^4\big]  \int_{d_n(s)}^s dt \leq C\frac{T}{n}.\]
Using Assumption {\bf (R1)},   the Fubini Theorem and \eqref{E1_S2}, we deduce that
\[ \bar{T}_3\leq C\mathbb{E} \int_0^T ds \,\, [1+|u^n(s)|^2]  \int_{d_n(s)}^s \,dt \leq C \frac{T}{n}.\]
Finally, Assumption {\bf (G1}, \eqref{E3y_S2} and the Fubini Theorem yield
\[ \bar{T}_5\leq \mathbb{E}\int_0^T ds \,  [K_0+K_1|y^n(s)|^2 +  \;\eps\; K_2 \Vert y^n(s)\Vert ^2] \int_{d_n(s)}^{s}  \, dt \leq C\frac{T}{n}.\]
This concludes the proof of \eqref{y-u} when $ \eps\; \in (0,1)$.
\end{proof}
\bigskip
 In section \ref{sec-convergence} we will prove that the scheme $u^n$ converges
 to the true  solution $u$ of  equation \eqref{weak}   in probability in $H$, with rate of convergence 
  almost equal to $1/2$. However, in order to prove this
 convergence, we have to obtain estimates similar to that proved in section \ref{sec3}, 
 shifting regularity; this will be done in
 section \ref{sect-periodic}.  Note that the results
 proved so far hold for a general hydrodynamical models with more general boundary conditions, while the results obtained
 in sections \ref{sect-periodic} and \ref{sec-convergence} require more smoothness on the coefficients and
 the initial condition, as well as periodic boundary conditions
 on some specific domain $D$.

 \section{Stochastic Navier-Stokes Equations with periodic boundary conditions}\label{sect-periodic}

In this section we will discuss the stochastic NSEs with periodic (in space) boundary conditions, or as it is usually called,
 on a $2$-D torus.
 The difference between the periodic boundary conditions and the Dirichlet ones investigated in the previous section
  is that in addition to the properties
  satisfied for both cases, the periodic case has also one additional \eqref{eqn-A-B}, see below.
 This assumption allows one not only to solve the problem with the initial datum $u_0$
  belonging to $V$ but also to get a priori bounds similar to that
 of \eqref{thm-estimate} for the solution
  and to that of Lemmas \ref{l1_S2} and \ref{l4_S2} for the scheme, shifting regularity by one level.

All what we have discussed throughout the paper until now applies to the case when the Dirichlet boundary conditions are replaced by the periodic boundary conditions.
In the latter case it is customary to study our problem  in the $2$-dimensional torus $\mathbb{T}^2$ (of fixed dimensions $L\times L$),
instead of a regular bounded domain $D$. All the mathematical background can be found in the small book \cite{Temam_1983} by Temam.  In particular, the space $\rH$ is equal to
\[\rH=\{ u\in \mathbb{L}_0^2 : \divv (u)=0 \mbox{ and } \gamma_{\mathbf{n}}(u)_{\vert \Gamma_{j+2}}=-\gamma_{\mathbf{n}}(u)_{\vert \Gamma_{j}}, \; j=1,2 \} , \]
where $\gamma_n$ is defined in \eqref{Temam_1.18}  and
$\mathbb{L}_0^2=L_0^2(\mathbb{T}^2,\mathbb{R}^2)$ is the Hilbert space consisting of those $u\in L^2(\mathbb{T}^2,\mathbb{R}^2)$
which satisfy $\int_{\mathbb{T}^2} u(x)\, dx=0$ and
$\Gamma_j$, $j=1,\cdots,4$ are the four (not disjoint) parts of the boundary of $\partial(\mathbb{T}^2)$ defined by
\[
\Gamma_j=\{ x=(x_1,x_2) \in [0,L]^2: x_j=0\},\;\Gamma_{j+2}=\{ x=(x_1,x_2) \in [0,L]^2: x_j=L\},\;\; j=1,2.
\]
Similarly, the space $\rV$ is equal to
\[\rV=\{ u\in \mathbb{L}_0^2\cap H^{1,2}(\mathbb{T}^2,\mathbb{R}^2): \divv u=0 \mbox{ and } u_{\vert \partial(\mathbb{T}^2)}=0 \}. \]

The Stokes operator $\rA$ can be defined in a natural way and it satisfies all the properties known in the bounded domain case.
In particular $A$ is positive and the following identity involving the  Stokes operator $A$ and  the nonlinear term $B$ holds:
\begin{eqnarray}\label{eqn-A-B}
\left<\rA u,B(u,u)\right>_\rH=0, \;\; u\in D(\rA);
\end{eqnarray}
see \cite[Lemma 3.1]{Temam_1983} for a proof.

 We will also need to strengthen the assumptions on the initial condition $u_0$ and on the coefficients $G$ and $R$ to obtain
 a uniform control of the $V$-norm of the solution. This is done in the following subsection.

\subsection{A priori estimates for the initial data in $V$ for the Stochastic NSEs on a torus}\label{subsec-torus}

Given $u\in V$, recall that we define $\mbox{\rm curl} \, u
= \partial_{x_1}u_{2}-\partial_{x_2}u_{1}$.  The following results are classical; see e.g. \cite{Yu}, or \cite{BesMil}
 where they are used in  a stochastic framework.\\
\begin{eqnarray}
 |\Delta u|^{2}&=& |\nabla  \mbox{\rm curl } \, u|^{2}, \quad \mbox{\rm for } u\in Dom(A),
 \label{deltacurl}\\
 |\nabla u|_{L^2} &\leq &C   |\mbox{\rm curl } u|_{L^2} , \quad \mbox{\rm for } u\in V, \label{gradientcurl}\\
\lb \mbox{\rm curl }  B(u,u),v \rb
&=&  \lb B(u ,\mbox{\rm curl } u ), v\rb,\quad \mbox{\rm for } u,v\in Dom(A).
\label{curlB}
\end{eqnarray}
To ease notations, we denote by $| { \rm curl}\, u |$ the $L^2$-norm of the one-dimensional function ${\rm curl}\, u $.

Suppose that  the coefficients $G$ and $R$ satisfy the following assumptions:

\noindent {\bf Assumption (G2):} $G :  [0,T]\times D(A)\to \mathcal{T}_2(K, V)$  (resp. $G : [0,T]\times  V\to \mathcal{T}_2(K, H)$ if $ \eps\; =0$),
and there exist positive constants
$K_i, i=0,1,2$ and $L_i, i=1,2$ such  that   for every $t\in [0,T]$, and $u,v\in D(A)$ (resp. $u,v\in V$):
 \begin{eqnarray} | \mbox{\rm curl } G(t,u)|^2_{\mathcal{T}_2(K,V)} &\leq& K_0+ K_1 |
{\rm curl}\, u|^2 +  \;\eps\; K_2 |A u|^2,  \label{growthG2} \\
|\mbox{\rm curl } G(t,u) - \mbox{\rm curl } G(t,v)|^2_{\mathcal{T}_2(K,V)} &\leq& L_1
|{\rm curl}\,  (u-v) |^2 +  \;\eps\; L_2 |Au-Av|^2.\label{LipG2}
\end{eqnarray}

\noindent {\bf Assumption (R2)}: Let us assume that $R$ is a measurable mapping
$R: [0,T]\times V\longrightarrow V$ such that for some positive constants  $R_{0}$ and $R_{1}$
\begin{equation}\label{R2}
|\mbox{\rm curl  R} (t,u)|_{L^2}\leq R_{0},\;  |\mbox{\rm curl } [R(t,u)-R (t,v)]|_{L^2} \leq R_{1}|\mbox{\rm curl }(u-v)|_{L^2},
\; u, v\in V.
\end{equation}

Let us put $\xi(t)=\mbox{\rm curl } u(t)$ where $u$ is the solution to \eqref{SNS}. Then $\xi$ solves the following equation on $[0,T]$
with initial condition  $\xi(0)=\mbox{\rm curl u}_0$:
\[ d\xi(t) + \big[A \xi(t) +  \mbox{\rm curl  B}(u(t),u(t)) + \mbox{\rm curl } R(t,u(t))\big] dt = \mbox{\rm curl G}(u(t)) \,dW(t).\]
An easy modification of the arguments in the proof of Proposition 2.2 in \cite{BesMil}  proves the following.

\begin{theorem}\label{BesMil}
Let us  assume that $u_{0}$ is a $V$-valued, ${\mathcal F}_0$-measurable
random variable with  $\mathbb{E}\Vert u_0\Vert ^{2p} <\infty$ for some real number $p\geq 2$. Assume that
that assumptions {\bf (G1)} and {\bf (G2)} are satisfied with $K_2<\frac{2}{2p-1}$ and $L_2<2$, and that the assumptions
{\bf (R1)} and {\bf (R2)} hold true.
Then the process $u$ solution of  \eqref{SNS} is such that
$u\in C([0,T]; V)\bigcap L^{2}(0,T; D(A))$ {\rm  a.s.}
Moreover, there exists a positive constant $C$ such that
\begin{equation} \mathbb{E}\Big( \sup_{t\in [0,T]}\Vert u(t)\Vert ^{2p}+\int_{0}^{T}|Au(s)|^{2}\big( 1+\Vert u(s)\Vert ^{2(p-1)} \big)\,ds \Big)\leq C(1+\mathbb{E} \Vert u_{0}\Vert ^{2p}).
\label{SNSinV}
\end{equation}
\end{theorem}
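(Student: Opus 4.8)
The plan is to apply the $\mbox{\rm curl}$ operator to \eqref{SNS} and derive an energy estimate for the vorticity $\xi = \mbox{\rm curl}\, u$ in $L^2$, exploiting the special structure \eqref{eqn-A-B} available on the torus. Since $\Vert u\Vert = |A^{1/2}u| = |\nabla u|_{L^2}$ is controlled by $|\mbox{\rm curl}\, u|_{L^2}$ via \eqref{gradientcurl}, and since $|Au| = |\nabla \mbox{\rm curl}\, u|$ by \eqref{deltacurl}, an estimate on $\EE(\sup_t |\xi(t)|^{2p} + \int_0^T |\nabla \xi(s)|^2(1+|\xi(s)|^{2(p-1)})\,ds)$ immediately yields \eqref{SNSinV}. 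First I would write the evolution equation for $\xi$ displayed just before the statement and apply the It\^o formula to $|\xi(t)|^{2p}$ (or first to $|\xi(t)|^2$, then to its $p$-th power via the chain rule, exactly as in Lemma \ref{l4_S2}). The crucial cancellation is that the nonlinear term contributes
\[
\lb \mbox{\rm curl}\, B(u,u), \xi\rb = \lb B(u,\mbox{\rm curl}\, u), \xi\rb = \lb B(u,\xi),\xi\rb = 0
\]
by \eqref{curlB} and the antisymmetry \eqref{skew}; this is precisely where \eqref{eqn-A-B} (equivalently \eqref{curlB}) is indispensable and is why the argument works on the torus but not for general Dirichlet conditions.

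After this cancellation, the It\^o expansion produces a term $2p|\xi|^{2(p-1)}\lb A\xi, \xi\rb = -2p|\xi|^{2(p-1)}|\nabla\xi|^2$ from the Stokes operator (providing the good dissipative term), the vorticity-curl of the Coriolis term $R$ controlled by Assumption {\bf (R2)}, a stochastic integral whose expectation vanishes, and the It\^o correction terms involving $|\mbox{\rm curl}\, G(t,u)|^2_{\mathcal{T}_2(K,V)}$. Using Assumption {\bf (G2)}, namely \eqref{growthG2}, the worst correction term is bounded by $p(2p-1)[K_0 + K_1|\xi|^2 + \eps K_2|A u|^2]|\xi|^{2(p-1)}$, and $|Au|^2 = |\nabla\xi|^2$. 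The coefficient of the dangerous $\eps\,|\nabla\xi|^2|\xi|^{2(p-1)}$ term is then $p(2p-1)K_2\eps$, which must be strictly dominated by the dissipation coefficient $2p\eps$; this is exactly guaranteed by the hypothesis $K_2 < \tfrac{2}{2p-1}$, so that $2p - p(2p-1)K_2 > 0$ and the $V$-norm (here $|\nabla\xi|$) integral can be absorbed on the left-hand side.

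Having absorbed the stochastic-parabolicity term, the remaining terms involving $|\xi|^{2p}$ are handled by the Burkholder--Davis--Gundy inequality for the stochastic integral (the martingale part is $2p\int_0^t |\xi|^{2(p-1)}\lb \mbox{\rm curl}\, G(s,u)\, dW, \xi\rb$, whose quadratic variation is controlled by \eqref{growthG2} and Young's inequality) together with the Gronwall Lemma. This yields $\EE\sup_{t}|\xi(t)|^{2p} < \infty$ and the integrated bound on $\int_0^T|\nabla\xi|^2(1+|\xi|^{2(p-1)})\,ds$, and hence \eqref{SNSinV}. The statement asserts this follows by an easy modification of Proposition 2.2 in \cite{BesMil}; the only genuinely new ingredient relative to the $H$-level estimates of Lemmas \ref{l1_S2} and \ref{l4_S2} is the vorticity reformulation and the identity \eqref{curlB}.

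The main obstacle I anticipate is making the formal It\^o computation on $|\xi(t)|^{2p}$ rigorous: one must justify that $u \in L^2(0,T;D(A))$ a priori (so that $\xi \in L^2(0,T;V)$ and $A\xi = \mbox{\rm curl}\, Au$ makes sense), which requires first establishing the regularity $u\in C([0,T];V)\cap L^2(0,T;D(A))$ through a Galerkin approximation and passage to the limit, with the estimates derived above performed on the finite-dimensional approximations and then transferred by lower semicontinuity of the norms. The delicate point is ensuring the stochastic-parabolicity absorption is uniform along the Galerkin scheme, but since the constant $2p - p(2p-1)K_2$ is strictly positive and independent of the approximation level, this is routine once the cancellation \eqref{curlB} is in place.
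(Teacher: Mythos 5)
Your proposal is correct and follows essentially the same route the paper indicates: the paper itself only displays the vorticity equation for $\xi=\mbox{\rm curl}\,u$ and defers the details to an ``easy modification'' of Proposition 2.2 of \cite{BesMil}, which is precisely the It\^o/Gronwall energy estimate on $|\xi(t)|^{2p}$ with the cancellation $\lb B(u,\xi),\xi\rb=0$ from \eqref{curlB} and \eqref{skew}, absorption of the $\eps K_2|Au|^2$ term under $K_2<\frac{2}{2p-1}$, and a Galerkin approximation to make the computation rigorous. (Two cosmetic slips --- the dissipative contribution is $-2p|\xi|^{2(p-1)}\lb A\xi,\xi\rb$, and for the true solution of \eqref{SNS} the dissipation coefficient is the full $2p$ rather than $2p\eps$ --- do not affect the argument.)
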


In the rest of this subsection we  suppose that the coefficients $G$ (resp. $R$) satisfy both Assumptions {\bf (G1)}, {\bf (G2)} (resp. {\bf (R1)} and {\bf (R2)}.
This will enable us to upper estimate the $V$-norm of the difference $y^n-u^n$, and hence to strengthen the inequality \eqref{y-u}.

For every $t\in [0,T)$ let $\xi^n(t)={\rm curl } \, u^n(t)$
and $\eta^n(t)= {\rm curl }\,  y^n(t)$.
Equations \eqref{deterministic-inductive}  and \eqref{stochastic-inductive}  imply that
$\xi^n(t)=\eta^n(t)= {\rm curl }\, u_0$ for $t\in [t_{-1},t_0)$  and for $i=0, \cdots, n-1$ and $t\in [t_i, t_{i+1})$, we have
\begin{align}\label{det-induc-curl}
&d \xi^n(t)  + \big[ (1-\eps) A\xi^n(t) + {\rm curl }\, B(u^n(t),u^n(t)) + {\rm curl }\, R(t,u^n(t))\big] dt =0, \nonumber\\
&\xi^n(t_i^+) = \eta^n(t_i^-),
\end{align}
and
\begin{align}\label{sto-induc-curl}
&d \eta^n(t)  +  \;\eps\; A\eta^n(t) dt =  {\rm curl }\, G(t,y^n(t)) \,dW(t), \nonumber\\
&\eta^n(t_i^+)= \xi^n(t_{i+1}^-).
\end{align}
The processes $(\xi^n(t), t_i\leq t<t_{i+1})$ (resp. $(\eta^n(t), t_i\leq t<t_{i+1})$) are well-defined processes
 which are ${\mathcal F}_{t_i}$-measurable
(resp. $({\mathcal F}_t)$-adapted). Furthermore, these equations  can be reformulated as follows for $t\in [0,T]$:
\begin{align}\label{equality_xi_2}
\xi^n(t)&={\rm curl }\, u_{0}- \!\int_{0}^{t} \!\!{\rm curl }\, F_{\eps}(s,u^n(s))ds
+\int_{0}^{d_{n}(t)}\!\!\left[ - \eps\; A \eta^n(s)ds+{\rm curl }\, G(s,y^n(s))dW(s)\right],\\
\label{equality_eta_2}
\eta^n(t)&={\rm curl }\,u_{0}- \!\int_{0}^{d_{n}^{*}(t)} \!\!\!{\rm curl }\, F_\eps(s,u^n(s))ds+\int_{0}^{t}\!\!
\left[ - \eps\; A \eta^n(s)ds+{\rm curl }\, G(s,y^n(s))dW(s)\right].
\end{align}

We at first prove the following analog of Lemmas \ref{l1_S2} and \ref{l4_S2}.

\begin{lemma}\label{momentsxieta} Assume that $p$ is a  integer such that $p\geq 2$.
Assume that   $G$ satisfies Assumptions {\bf (G1)} and  {\bf (G2)} with $K_2<\frac{2}{2p-1}$  and $L_2<2$,
and  $R$ satisfies Assumptions {\bf (R1)} and {\bf(R2)}.
Let $u_0$ be an ${\mathcal F}_0$-measurable, $V$-valued random variable  such that $\mathbb{E}\Vert u_0\Vert ^{2p}<\infty$.
Then for $ \eps\; \in [0,1)$, there exists a positive constant $C$ such that for every integer $n\geq 1$
\begin{equation} \label{xin}
 \sup_{t\in [0,T]}\mathbb{E} \big(  \Vert u^n(t)\Vert ^{2p} + \Vert y^n(t)\Vert ^{2p} \big)
+ \mathbb{E}\int_0^T \big( 1+ \Vert u^n(t)\Vert ^{2(p-1)} \big) |A u^n(t)|^{2} dt \leq C.
\end{equation}
Furthermore, if $ \eps\; \in (0,1)$, there exists a positive constant $C$ such that for every integer $n\geq 1$
\begin{equation}\label{etan}
 \mathbb{E}\int_0^T \big( 1+ \Vert y^n(t)\Vert ^{2(p-1)}) |A y^n(t)|^2 dt \leq C.
\end{equation}
\end{lemma}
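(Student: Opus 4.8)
The plan is to carry over the arguments of Lemmas \ref{l1_S2} and \ref{l4_S2} to the vorticity variables $\xi^n=\mbox{\rm curl}\, u^n$ and $\eta^n=\mbox{\rm curl}\, y^n$, which solve the split system \eqref{det-induc-curl}--\eqref{sto-induc-curl}. The key observation is that on the torus these variables encode exactly the norms appearing in \eqref{xin}--\eqref{etan}: by \eqref{gradientcurl} together with the elementary bound $|\mbox{\rm curl}\, u|\le C\|u\|$, the quantity $|\xi^n|$ is equivalent to $\|u^n\|$, while by \eqref{deltacurl} one has $|\nabla \xi^n|=|A u^n|$, and likewise for $\eta^n$ and $y^n$. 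Thus controlling the $2p$-th moments of the $L^2$-norms of $\xi^n,\eta^n$ and the space--time integrals of their gradients is the same as proving \eqref{xin} and \eqref{etan}. I would therefore rerun the two-step induction of Section \ref{sec3}, one level of regularity higher, replacing Assumptions {\bf (G1)}, {\bf (R1)} by {\bf (G2)}, {\bf (R2)}.

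For the deterministic half-step I would pair \eqref{det-induc-curl} with $\xi^n$ and integrate over $(t_i,t]$. The viscous term yields the good dissipation $2(1-\eps)\int|\nabla\xi^n|^2=2(1-\eps)\int|A u^n|^2$, and the term from $R$ is absorbed using the curl bounds of Assumption {\bf (R2)}. The decisive point is the cancellation of the nonlinear term: by the torus identity \eqref{curlB} and the antisymmetry \eqref{skew} (in its vorticity form $\lb B(u^n,\xi^n),\xi^n\rb=0$) one gets $\lb \mbox{\rm curl}\, B(u^n,u^n),\xi^n\rb=\lb B(u^n,\xi^n),\xi^n\rb=0$. This is precisely where the periodic structure \eqref{eqn-A-B} enters; it is the only genuinely new ingredient relative to Section \ref{sec3}. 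The resulting identity is the exact analog of \eqref{majouni}, and, after applying the chain rule to $|\xi^n|^{2p}$, of \eqref{u4}.

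For the stochastic half-step I would apply the It\^o formula to $|\eta^n|^{2p}$ along \eqref{sto-induc-curl}. The operator $\eps A$ produces the dissipative term $2p\,\eps\int|\eta^n|^{2(p-1)}|\nabla\eta^n|^2=2p\,\eps\int|\eta^n|^{2(p-1)}|A y^n|^2$, and the quadratic variation is controlled by the growth bound \eqref{growthG2}, which reproduces on the right-hand side the contributions $K_0+K_1|\eta^n|^2+\eps K_2|A y^n|^2$. As in Lemma \ref{l4_S2}, the stochastic parabolicity condition $K_2<\frac{2}{2p-1}$ lets me absorb the $\eps K_2|A y^n|^2$ term into the dissipation, giving the analog of \eqref{majoyni} and \eqref{y4}. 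Taking expectations, using Gronwall on each subinterval, and then running the same interval-by-interval induction as in Section \ref{sec3} (matching the junction conditions $\xi^n(t_i^+)=\eta^n(t_i^-)$ and $\eta^n(t_i^+)=\xi^n(t_{i+1}^-)$) yields the uniform moment bound $\sup_t\mathbb{E}(\|u^n(t)\|^{2p}+\|y^n(t)\|^{2p})\le C$. Summing the per-interval dissipation estimates over $i=0,\dots,n-1$ and inserting this bound then gives the integral controls in \eqref{xin} and, when $\eps\in(0,1)$, in \eqref{etan}.

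The step I expect to be most delicate is the book-keeping in the stochastic half-step. Because the noise coefficient now carries a genuine $\eps K_2|A y^n|^2$ term rather than an $\eps K_2\|y^n\|^2$ term as in {\bf (G1)}, the condition $K_2<\frac{2}{2p-1}$ must be used precisely so that a strictly positive multiple of $\int|\eta^n|^{2(p-1)}|A y^n|^2$ survives on the left after summation; this is exactly the quantity controlled in \eqref{etan}. When $\eps=0$ this dissipation is absent (and, consistently, {\bf (G2)} then forbids any $D(A)$-term in $G$), which is why \eqref{etan} is claimed only for $\eps\in(0,1)$, whereas the $u^n$-estimate \eqref{xin} holds for all $\eps\in[0,1)$ thanks to the deterministic dissipation $2(1-\eps)\int|A u^n|^2$.
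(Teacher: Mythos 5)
Your proposal is correct and follows essentially the same route as the paper: pass to the vorticity variables $\xi^n,\eta^n$, use \eqref{deltacurl}--\eqref{curlB} to identify their norms with those of $u^n,y^n$, exploit the cancellation $\lb \mbox{\rm curl}\, B(u^n,u^n),\xi^n\rb=\lb B(u^n,\xi^n),\xi^n\rb=0$ in the deterministic half-step and the It\^o formula with Assumption {\bf (G2)} and the condition $K_2<\frac{2}{2p-1}$ in the stochastic half-step, then rerun the interval-by-interval induction of Lemmas \ref{l1_S2} and \ref{l4_S2}. (The only cosmetic slip is attributing the cancellation to \eqref{eqn-A-B} rather than to \eqref{curlB}, which is the identity actually invoked.)
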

\begin{proof}

We briefly sketch the proof, which is similar to that of Lemmas \ref{l1_S2} and \ref{l4_S2}. Let us fix $n\in \mathbb{N}$.
First note that, using Lemmas  \ref{l1_S2} and \ref{l4_S2}, \eqref{deltacurl} and \eqref{gradientcurl}, it is easy to see that
the upper estimates \eqref{xin} and \eqref{etan}
can be deduced from similar upper estimates where $\Vert  u^n\Vert $, $\Vert y^n\Vert $, $| A u^n |$ and $| A y^n |$ are replaced by
$|\xi^n|$, $|\eta^n|$, $\Vert  \xi^n\Vert $ and $\Vert \eta^n\Vert $ respectively.
We prove by induction that for $l\in \{ -1, 0, \cdots, n-1\}$,
\begin{align}
\Big[ \mathbb{E}\Big( \sup_{t\in [t_l,t_{l+1})} &|\xi^n(t)|^2\Big) \vee \Big( \sup_{t\in [t_l,t_{l+1})} \mathbb{E}|\eta^n(t)|^2\Big) \Big]
\leq \mathbb{E}|{\rm curl }\, u_0 |^2 e^{\frac{(l+1)aT}{n}} +
\frac{\alpha T}{n} \sum_{j=1}^{l+1} e^{\frac{jaT}{n}}, \label{majo-induc-2}\\
\Big[ \mathbb{E}\Big( \sup_{t\in [t_l,t_{l+1})} &|\xi^n(t)|^{2p}\Big) \vee \Big( \sup_{t\in [t_l,t_{l+1})} \mathbb{E}|\eta^n(t)|^{2p}\Big) \Big]
\leq \mathbb{E}|{\rm curl}\, u_{0}|^{2p}{\rm e}^{\frac{(l+1)bT}{n}}+\frac{\beta T}{n}\sum_{j=1}^{l+1}{\rm e}^{\frac{jbT}{n}},
\label{majo-induc-4}
\end{align}
where    $a:=2(R_{0}+R_{1})+K_{1}$, $\alpha:=2R_{0}+K_{0}$,
$b:=2p(R_{0}+R_{1})+p(2p-1)(K_{0}+K_{1})$ and $\beta:=2pR_{0}+p(2p-1)K_{1}$.
Indeed, these inequalities hold for $l=-1$. Suppose that they hold for $l\leq i-1$, $i<n-1$; we prove them for $l=i$.

We at first prove \eqref{majo-induc-2} for $l=i$.
The identities \eqref{det-induc-curl},  \eqref{curlB} and Assumption {\bf (R2)} imply that for $t\in [t_i, t_{i+1})$, we have
\begin{align*}
|\xi_n(t)|^2 &+ 2(1-\eps) \int_{t_i}^t \Vert \xi^n(s)\Vert ^2\,ds = |\eta^n(t_i^-)|^2 -2 \int_{t_i}^t \lb {\rm curl} \, R(s,u^n(s)), \xi^n(s)\rb\,ds\\
&\leq |\eta^n(t_i^-)|^2 + 2 \frac{R_0 T}{n} + 2 (R_0+R_1)\int_{t_i}^t  |\xi^n(s)|^2\,ds,
\end{align*}

while the identities \eqref{sto-induc-curl} and \eqref{deltacurl}, the It\^o Lemma  and Assumption {\bf (G2)} imply
\begin{align*}
\mathbb{E}|\eta^n(t)|^2 &+ 2 \eps\; \mathbb{E}\int_{t_i}^t \Vert  \eta^n(s)\Vert ^2\,ds = \mathbb{E}|\xi^n(t_{i+1}^-)|^2 + \mathbb{E}\int_{t_i}^t \Vert  {\rm curl} \, G(s,y^n(s))\Vert _{\mathcal{T}_2(K,H)}^2 \,ds\\
&\leq \mathbb{E}|\xi^n(t_{i+1}^-)|^2 + \frac{K_0 T}{n} + K_1  \mathbb{E}\int_{t_i}^t |\eta^n(s)|^2 \,ds +  \eps\; K_2 \mathbb{E}\int_{t_i}^t \Vert   \eta^n(s)\Vert ^2]\,ds.
\end{align*}
Then arguments similar to that used in the proof of Lemma \ref{l1_S2} yield \eqref{majo-induc-2} and then
for every $ \eps\; \in [0,1)$ and $C$ independent of $n$
\[ \sup_{t\in [0,T)}\mathbb{E} \big(  \Vert u^n(t)\Vert ^2 + \Vert y^n(t)\Vert ^2\big)  + \sup_{n\in\mathbb{N}} \mathbb{E}\int_0^T  |A u^n(t)|^2 dt \leq C.\]
Furthermore, for $ \eps\; \in (0,1)$ we have
\[ \mathbb{E}\int_0^T  |A y^n(t)|^2 dt \leq C.\]
We then prove \eqref{majo-induc-4}  for $l=i$.
  Using once more   \eqref{curlB} and Assumption {\bf (R2)}, we deduce that
 for $t\in [t_i, t_{i+1})$, we have
 \begin{align*}
|\xi^n(t)|^{2p} &+ 2p (1-\eps)\! \int_{t_i}^t \! \Vert \xi^n(s)\Vert ^2 |\xi^n(s)|^{2(p-1)}\,ds\\
& =
|\eta^n(t_i^-)|^{2p} -2p\!  \int_{t_i}^t \! \lb {\rm curl} \, R(s,u^n(s)), \xi^n(s)\rb |\xi^n(s)|^{2(p-1)} \,ds\\
&\leq |\eta^n(t_i^-)|^{2p}  + 2p \frac{R_0T}{n} + 2p (R_0+R_1)  \int_{t_i}^t |\xi^n(s)|^{2p}\,ds.
\end{align*}
Similarly, \eqref{sto-induc-curl}, the It\^o Lemma  and Assumption {\bf (G2)} imply that for $t\in [t_i, t_{i+1})$, we have
\begin{align*}
\mathbb{E}&|\eta^n(t)|^{2p} + 2p  \eps\; \mathbb{E}\int_{t_i}^t \Vert  \eta^n(s)\Vert ^2 |\eta^n(s)|^{2(p-1)}  \,ds \\
&=
 \mathbb{E}|\xi^n(t_{i+1}^-)|^{2p} + 2p(p-1) \mathbb{E}\int_{t_i}^t \Vert  {\rm curl} \, G(s,y^n(s))\Vert _{\mathcal{T}_2(K,H)}^2  |\eta^n(s)|^{2(p-1)}\,ds\\
&\leq \mathbb{E}|\xi^n(t_{i+1}^-)|^{2p} + 2p(p-1) \frac{K_0 T}{n} + 2p(p-1)(K_0+K_1) \mathbb{E}\int_{t_i}^t \!\! |\eta^n(s)|^{2p}\,ds\\
&\qquad + 2p(p-1) \eps\; K_2 \mathbb{E}\int_{t_i}^t \!\! \Vert \eta^n(s)\Vert ^2 |\eta^n(s)|^{2(p-1)}\,ds.
\end{align*}
Then arguments similar to those used in Lemmas \ref{l1_S2} and \ref{l4_S2} yield \eqref{majo-induc-4},
and then \eqref{xin} and \eqref{etan}. This concludes the proof of the proposition since $(2p-1)K_2<2$ when $ \eps\; >0$.
\end{proof}
We finally find an estimate from above on the  $V$ norm of the difference of $u^n$ and $y^n$. This estimate will be used to obtain
the speed of convergence of the scheme.
\begin{proposition}\label{prop-yn-un_V}
Assume  that $u_0$ is ${\mathcal F}_0$-measurable with  $\mathbb{E}\Vert u_0\Vert ^4 <\infty$; let $T>0$ and $ \eps\; \in [0,1)$.
Assume that  Assumptions {\bf (G1)} and {\bf (G2)} are satisfied  with $K_2 < 2/3$ and $L_2<2$,
and  that
Assumptions {\bf (R1)} and {\bf (R2)} hold.
 Then there exists a positive constant $C:=C(T)$,  such that for every integer  $n\geq 1$
\begin{equation} \label{yn-un_V}
\mathbb{E}\int_0^T \Vert  y^n(t) - u^n(t)\Vert ^2 dt \leq \frac{C}{n}.
\end{equation}
\end{proposition}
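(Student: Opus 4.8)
The plan is to mirror the proof of Proposition~\ref{prop_y-u}, but one regularity level higher, working with the vorticity processes $\xi^n=\mathrm{curl}\,u^n$ and $\eta^n=\mathrm{curl}\,y^n$. Since \eqref{gradientcurl} gives $\|y^n(t)-u^n(t)\|=|\nabla(y^n(t)-u^n(t))|\le C\,|\eta^n(t)-\xi^n(t)|$, it suffices to prove
\[
\mathbb{E}\int_0^T |\eta^n(t)-\xi^n(t)|^2\,dt \le \frac{C}{n}.
\]
Throughout I would use Lemma~\ref{momentsxieta}, which via \eqref{deltacurl} and \eqref{gradientcurl} yields $\sup_t \mathbb{E}\big(|\xi^n(t)|^2+|\eta^n(t)|^2\big)\le C$ and $\mathbb{E}\int_0^T\|\xi^n(s)\|^2\,ds\le C$ for all $\eps\in[0,1)$, together with $\mathbb{E}\int_0^T\|\eta^n(s)\|^2\,ds\le C$ when $\eps\in(0,1)$, as well as the fourth moment controls associated with $p=2$. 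As in Proposition~\ref{prop_y-u}, I would split into the cases $\eps=0$ and $\eps\in(0,1)$.

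For $\eps=0$, equation \eqref{sto-induc-curl} gives $\eta^n(t)=\xi^n(d_n^*(t)^-)+\int_{d_n(t)}^t \mathrm{curl}\,G(s,y^n(s))\,dW(s)$, so the It\^o isometry, Assumption \textbf{(G2)} with $\eps=0$, the Fubini theorem and the bound on $\sup_t\mathbb{E}|\eta^n(t)|^2$ give $\mathbb{E}\int_0^T|\eta^n(t)-\xi^n(d_n^*(t)^-)|^2\,dt\le C/n$, exactly as in \eqref{y-u*}. It then remains to control the deterministic increment $\mathbb{E}\int_0^T|\xi^n(d_n^*(t)^-)-\xi^n(t)|^2\,dt$; expanding $|\xi^n(d_n^*(t)^-)-\xi^n(t)|^2=2\int_t^{d_n^*(t)}\lb \xi^n(s)-\xi^n(t),d\xi^n(s)\rb$ via \eqref{det-induc-curl} produces three terms mirroring $T_1,T_2,T_3$ of Proposition~\ref{prop_y-u}: a viscous term controlled by Young's inequality and $\mathbb{E}\int_0^T\|\xi^n\|^2\,ds\le C$, a Coriolis term controlled by Assumption \textbf{(R2)}, and the nonlinear term, which by \eqref{curlB} becomes an integral of $\lb B(u^n(s),\xi^n(s)),\xi^n(s)-\xi^n(t)\rb$.

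For $\eps\in(0,1)$ I would instead read off from \eqref{equality_xi_2}--\eqref{equality_eta_2} that $\eta^n(t)-\xi^n(t)$ is the sum of a deterministic $\mathrm{curl}\,F_\eps$-increment over $[t,d_n^*(t)]$, a viscous increment $-\eps\int_{d_n(t)}^t A\eta^n(s)\,ds$ and a stochastic increment $\int_{d_n(t)}^t\mathrm{curl}\,G(s,y^n(s))\,dW(s)$, then apply the It\^o formula to $|\eta^n(t)-\xi^n(t)|^2$ and integrate in $t$. This produces five terms $\bar T_1,\dots,\bar T_5$ exactly parallel to Proposition~\ref{prop_y-u}: the two viscous terms $\bar T_1,\bar T_4$ are dominated using the vorticity analogues of \eqref{ineq-new02}, namely $2\lb A\xi^n,\eta^n-\xi^n\rb\le\|\eta^n\|^2$ and $-2\lb A\eta^n,\eta^n-\xi^n\rb\le\|\xi^n\|^2$, together with \eqref{xin} and \eqref{etan}; the Coriolis term $\bar T_3$ by Assumption \textbf{(R2)}; and the quadratic variation $\bar T_5$ by Assumption \textbf{(G2)}, the Fubini theorem and \eqref{etan}. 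Each term carries a factor $\int_{d_n(s)}^s dt\le T/n$ and hence contributes $C/n$.

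I expect the main obstacle, in both cases, to be the nonlinear term, which for $\eps\in(0,1)$ reads $\bar T_2=2\mathbb{E}\int_0^T dt\int_t^{d_n^*(t)}\lb B(u^n(s),\xi^n(s)),\eta^n(s)-\xi^n(s)\rb\,ds$ after using \eqref{curlB}. Here the test function is essentially the quantity being estimated, so I would apply \eqref{B1} to bound $|\lb B(u^n,\xi^n),\eta^n-\xi^n\rb|\le \eta\|\eta^n-\xi^n\|^2+C_\eta\,|u^n|_{\mathrm{X}}^2|\xi^n|_{\mathrm{X}}^2$; the first contribution is harmless because $\mathbb{E}\int_0^T\|\eta^n-\xi^n\|^2\,ds\le C$ by \eqref{xin} and \eqref{etan}, while the crux is the a priori bound $\mathbb{E}\int_0^T|u^n|_{\mathrm{X}}^2|\xi^n|_{\mathrm{X}}^2\,ds\le C$. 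For the latter I would use \eqref{inter} to get $|u^n|_{\mathrm{X}}^2|\xi^n|_{\mathrm{X}}^2\le C|u^n|\,\|u^n\|\,|\xi^n|\,|Au^n|$, then the embeddings $|u^n|\le C\|u^n\|$ and $|\xi^n|\le C\|u^n\|$ followed by Young's inequality to reduce to $C\big(\|u^n\|^2|Au^n|^2+\|u^n\|^4\big)$, whose time--space expectation is finite by the $p=2$ estimate \eqref{xin}. The remaining Fubini factor $\int_{d_n(s)}^s dt\le T/n$ then yields $\bar T_2\le C/n$; the $\eps=0$ nonlinear term is handled in the same manner after invoking \eqref{skew} and the Cauchy--Schwarz inequality exactly as for $T_2$ in Proposition~\ref{prop_y-u}. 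Combining all the estimates gives \eqref{yn-un_V}.
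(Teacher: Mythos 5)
Your proposal is correct and follows essentially the same route as the paper: reduce via \eqref{gradientcurl} to the vorticity difference $\eta^n-\xi^n$, apply the It\^o formula to $|\eta^n(t)-\xi^n(t)|^2$ mirroring Proposition~\ref{prop_y-u}, and control the critical nonlinear term $\lb B(u^n,\xi^n),\eta^n-\xi^n\rb$ via \eqref{B1}, \eqref{inter} and the $p=2$ bounds of Lemmas~\ref{l4_S2} and~\ref{momentsxieta}, which is exactly what the paper does (the paper only writes out the case $\eps\in(0,1)$ and refers to Proposition~\ref{prop_y-u} for $\eps=0$, which you spell out consistently).
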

\begin{proof}
Using \eqref{y-u} and \eqref{gradientcurl}, we see that the proof of \eqref{yn-un_V} reduces to check that
\begin{equation} \label{xin-etan}
\mathbb{E}\int_0^T |\eta^n(t) - \xi^n(t)|^2 dt \leq \frac{C}{n}.
\end{equation}
We only prove this inequality when $ \eps\; \in (0,1)$. The proof in the case  $ \eps\; =0$ can be done by  adapting  the arguments in the proof of Proposition
\ref{prop_y-u}. So let us assume that  $ \eps\; \in (0,1)$. Then   the It\^o Lemma,
\eqref{xin} and \eqref{etan} imply that for any $t\in (0,T)$:
\begin{align*}
&\mathbb{E}|\eta^n(t)-\xi^n(t)|^2= -2 \; \mathbb{E}  \int_t^{d^*_n(t)} \!\! \Big[ (1-\eps) \lb A\xi^n(s),\eta^n(s)-\xi^n(s)\rb \\
&\qquad \qquad +  \lb B(u^n(s),\xi^n(s)), \eta^n(s) - \xi^n(s)\rb + \lb {\rm curl }\, R(s, u^n(s)) , \eta^n(s)-\xi^n(s)\rb \Big]\,ds \\
& \quad -2 \eps\; \mathbb{E} \int_{d_n(t)}^t \lb A \eta^n(s), \eta^n(s)-\xi^n(s)\rb
+\mathbb{E} \int_{d_n(t)}^t \Vert  {\rm curl }\, G(s,y^n(s))\Vert _{\mathcal{T}_2(K,H)}^2\,ds
\end{align*}
Integrating on $[0,T]$, using \eqref{B1} with $\eta =1$,
Assumptions {\bf (R2)} and {\bf (G2)},  the Fubini Theorem and Lemma \ref{momentsxieta},
we obtain
\begin{align*}
\mathbb{E}\! \int_0^T & \!\! \!|\eta^n(t)-\xi^n(t)|^2 dt \leq 2 \mathbb{E}\! \int_0^T\!\! \!\,ds \Big[ (1-\eps) |A u^n(s)| |A (y^n(s)-u^n(s))|  +  |A(u^n(s)- y^n(s)|^2 \\
& + C_1\vert u^n(s)\vert_\mathrm{X}^2 \vert \xi^n(s)\vert_\mathrm{X}^2 \Big] \int_{d_n(s)}^s \!\! dt
+ 2 \eps\; \mathbb{E}\int_0^T \!\! \,ds (|A u^n(s)|^2 + |Ay^n(s)|^2)  \int_s^{d^*_n(s)} \!\! dt \\
&+ \mathbb{E}\int_0^T \!\,ds [K_0+K_1 |\eta^n(s)|^2 + K_2  \eps\; |A y^n(s)|^2] \int_s^{d^*_n(s)} \!\! dt\\
\leq & \frac{CT}{n} \mathbb{E}\int_0^T \!\!\big( |A u^n(s)|^2 + |A y^n(s)|^2 + \Vert  u^n(s)\Vert ^4 + |u^n(s)|^4 + |\xi^n(s)|^2 |Au^n(s)|^2\big)\,ds \\
\leq & \frac{CT}{n},
\end{align*}
where the last inequality can be deduced from Lemmas \ref{l4_S2} and  \ref{momentsxieta}. This concludes the proof.
\end{proof}

\subsection{A priori estimates for the process $z^n$}

For technical reasons, let  us consider the process $(z^n(t), t\in [0,T])$  defined by
\begin{equation}\label{defzn}
z^n(t)=u_0 - \int_0^t F_\eps(s,u^n(s))\,ds -  \eps\; \int_0^{d_n(t)} \! A y^n(s)\,ds
+ \int_{0}^{t} G(s,y^n(s))dW(s).
\end{equation}
Note that for any $ \eps\in [ 0,1)$ the process $z^n(t)$ coincides with $u^n(t^+)$ and $y^n(t^-)$ on the time grid, i.e.,
\begin{equation}\label{uyz_n}
z^n(t_k)=y^n(t_k^-)=u^n(t_k^+) \quad \mbox{\rm for} \; k=0,1, \cdots, n.
\end{equation}
The following lemma gives upper estimates of the differences $z^n-u^n$ and $z^n-y^n$ in various topologies.

\begin{lemma} \label{lem_zn-un}
Let $ \eps\; \in [0,1)$ and let $u_0$ be ${\mathcal F}_0$-measurable.

(i) Assume that $u_0$ is $H$-valued with ${\mathbb E} |u_0|^{2p}<\infty$ for some integer $p\geq 2$. Suppose  that   Assumption  {\bf (G1)}  holds  with
$K_2<\frac{2}{2p-1}$ and $L_2<2$  and  that Assumption {\bf (R1)} is satisfied. Then there exists a positive constant $C:=C(T, \eps)$
such that for every integer $n\geq 1$,
\begin{equation}\label{z-u_H}
 \sup_{t\in [0,T]}  \mathbb{E}|z^n(t)- u^n(t)|^{2p} \leq \frac{C}{n^p}\, .
 \end{equation}

 (ii) Assume that $u_0$ is $V$-valued with ${\mathbb E} \Vert u_0\Vert ^{4}<\infty$,  let Assumptions {\bf (G1)} and {\bf (G2)} hold with
 $K_2<\frac23$  and $L_2<2$ and let Assumptions {\bf (R1)} and {\bf (R2)} be satisfied.
Then there exists a positive constant $C:=C(T, \eps)$ such that for every integer $n\geq 1$,
 \begin{equation} \label{z-u_L2V}
 \mathbb{E}\int_0^T \big( \Vert z^n(t)-u^n(t)\Vert ^2 + \Vert z^n(t)-y^n(t)\Vert ^2\big) dt \leq \frac{C}{n}\, .
 \end{equation}
 \end{lemma}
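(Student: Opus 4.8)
The plan is to first make both differences explicit by subtracting the integral representations \eqref{equality_u_2} and \eqref{equality_y_2} from the definition \eqref{defzn}. Since the drift $\int_0^t F_\eps(s,u^n(s))\,ds$ and the term $\eps\int_0^{d_n(t)} Ay^n(s)\,ds$ are shared by $z^n$ and $u^n$, they cancel and only the tail of the stochastic integral survives:
\[
z^n(t)-u^n(t)=\int_{d_n(t)}^t G(s,y^n(s))\,dW(s),
\]
a martingale increment over an interval of length at most $T/n$. Subtracting \eqref{equality_y_2} instead, the two It\^o integrals cancel and one is left with $z^n(t)-y^n(t)=\int_t^{d_n^*(t)} F_\eps(s,u^n(s))\,ds+\eps\int_{d_n(t)}^t Ay^n(s)\,ds$; but I will not estimate this directly, since it is more efficient to write $z^n-y^n=(z^n-u^n)+(u^n-y^n)$ and invoke the already available control of $u^n-y^n$.

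For part (i) I would apply the Burkholder--Davis--Gundy inequality to the martingale above, obtaining
\[
\mathbb{E}|z^n(t)-u^n(t)|^{2p}\le C_p\,\mathbb{E}\Big(\int_{d_n(t)}^t \|G(s,y^n(s))\|_{\mathcal{T}_2(K,H)}^2\,ds\Big)^p,
\]
and then bound the integrand by Assumption {\bf (G1)}, i.e. \eqref{assumptionG1}. Because $[d_n(t),t]$ has length $\le T/n$, Jensen's inequality extracts a factor $(T/n)^{p-1}$, and the contributions of $K_0$ and $K_1|y^n|^2$ are controlled by $\sup_s\mathbb{E}|y^n(s)|^{2p}\le C$ from Lemma \ref{l4_S2}, which gives $C(T/n)^p$. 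The remaining gradient contribution $\eps K_2\|y^n\|^2$ is the delicate one: it disappears when $\eps=0$ (the clean case), and for $\eps>0$ it must be absorbed through the weighted parabolic estimate \eqref{E4y_S2}.

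For part (ii), when $\eps>0$, I would estimate $z^n-u^n$ in $V$ via the vorticity. It\^o's isometry in $V$ gives $\mathbb{E}\|z^n(t)-u^n(t)\|^2=\mathbb{E}\int_{d_n(t)}^t\|G(s,y^n(s))\|_{\mathcal{T}_2(K,V)}^2\,ds$; bounding the Hilbert--Schmidt $V$-norm by Assumption {\bf (G2)}, i.e. \eqref{growthG2}, together with \eqref{gradientcurl}, then integrating in $t$ and applying Fubini's theorem produces the factor $\int_s^{d_n^*(s)}dt\le T/n$, after which Lemma \ref{momentsxieta} (in particular \eqref{etan} for $\mathbb{E}\int_0^T|Ay^n|^2\,dt$ and $\sup_t\mathbb{E}\|y^n(t)\|^2\le C$) yields $\mathbb{E}\int_0^T\|z^n(t)-u^n(t)\|^2\,dt\le C/n$. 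The bound for $z^n-y^n$ then follows for free from $\|z^n-y^n\|^2\le 2\|z^n-u^n\|^2+2\|u^n-y^n\|^2$ combined with Proposition \ref{prop-yn-un_V}. The case $\eps=0$ cannot use It\^o's isometry in $V$ (there $G$ need only take values in $\mathcal{T}_2(K,H)$), and I would instead adapt the curl-based integration-by-parts computation used in Proposition \ref{prop_y-u}, as in the proof of Proposition \ref{prop-yn-un_V}.

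The main obstacle throughout is the gradient term in the diffusion coefficient. In part (i) with $\eps>0$ it cannot be handled by $H$-moments alone and forces one to exploit the parabolic smoothing via \eqref{E4y_S2}; in part (ii) it is precisely the term requiring the $D(\rA)$-regularity bound \eqref{etan}, which is why the stronger hypotheses ($u_0\in V$, Assumption {\bf (G2)}, and the stochastic parabolicity $K_2$ small) are indispensable. Everything else reduces to Jensen or Cauchy--Schwarz over a short interval, combined with Fubini's theorem and the a priori moment bounds of Lemmas \ref{l4_S2} and \ref{momentsxieta}.
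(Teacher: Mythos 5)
Your reduction of both differences to the stochastic-integral increment $z^n(t)-u^n(t)=\int_{d_n(t)}^t G(s,y^n(s))\,dW(s)$ and the decomposition $z^n-y^n=(z^n-u^n)+(u^n-y^n)$ is exactly the paper's route, and your part (ii) is essentially the paper's proof. (One remark: no separate treatment of $\eps=0$ is needed there. Under Assumption \textbf{(G2)} the growth bound \eqref{growthG2} on $\mathrm{curl}\, G$ holds for $u\in V$ with the $|Au|^2$ term carrying the factor $\eps$, so the It\^o isometry/Fubini computation for $\zeta^n-\xi^n$ goes through verbatim when $\eps=0$; your proposed detour through the integration-by-parts argument of Proposition \ref{prop_y-u} is unnecessary.)

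The genuine gap is in part (i) for $\eps>0$. After Burkholder--Davis--Gundy and Jensen you must control $\mathbb{E}\big(\int_{d_n(t)}^t \eps K_2\Vert y^n(s)\Vert^2\,ds\big)^p$ by $Cn^{-p}$ uniformly in $t$. The estimate \eqref{E4y_S2} you invoke bounds $\mathbb{E}\int_0^T\Vert y^n(s)\Vert^2\,\vert y^n(s)\vert^{2(p-1)}\,ds$: the extra $2(p-1)$ powers carry the $H$-norm, not the $V$-norm, so it controls neither $\Vert y^n\Vert^{2p}$ nor, a fortiori, the $p$-th power of a short-time integral of $\Vert y^n\Vert^2$. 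Even if you had the integrated bound $\mathbb{E}\int_0^T\Vert y^n(s)\Vert^{2p}\,ds\le C$, inserting it after the Jensen factor $(T/n)^{p-1}$ would only yield $Cn^{-(p-1)}$, because the remaining integral is no longer localized to an interval of length $T/n$. What is actually needed (and what the paper uses) is the pointwise-in-time bound $\sup_{t\in[0,T]}\mathbb{E}\Vert y^n(t)\Vert^{2p}\le C$, i.e.\ \eqref{xin} of Lemma \ref{momentsxieta}, which supplies the missing factor $T/n$ from the length of $[d_n(t),t]$. That bound is only available under the $V$-level hypotheses ($u_0\in V$ with $\mathbb{E}\Vert u_0\Vert^{2p}<\infty$, Assumptions \textbf{(G2)} and \textbf{(R2)}) --- a point on which the statement of part (i) is itself silent --- so for $\eps>0$ you must either import those hypotheses or produce $\sup_t\mathbb{E}\Vert y^n(t)\Vert^{2p}$ by some other means. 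For $\eps=0$ the gradient term vanishes and your argument for part (i) is complete.
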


\begin{proof}
For $t\in [0,T]$, we have
\[ z^n(t)-u^n(t)=\int_{d_n(t)}^t  G(s,y^n(s)) dW(s).\]
Let the assumptions of (i) be satisfied. Since  $\mathbb{E}|u_0|^{2p}<\infty$
 and $K_2<\frac2{2p-1}$, the Burkholder-Davies-Gundy and H\"older inequalities together
 with Assumption {\bf (G1)}  and  Lemma
\ref{l4_S2}
imply that for any $t\in [0,T]$,
\begin{align*}
\mathbb{E}|z^n(t)&-u^n(t)|^{2p} \leq C_p \mathbb{E}\Big| \int_{d_n(t)}^t \Vert G(s,y^n(s)\Vert ^2_{\mathcal{T}_2(K,H)}\,ds \Big|^p\\
&\leq C_p \Big(\frac{T}{n}\Big)^{p-1} \mathbb{E} \int_{d_n(t)}^t \Big| K_0 + K_1 |y^n(s)|^2 +  \eps\;  K_2 \Vert y^n(s)\Vert ^2 \Big|^p\,ds\\
&\leq \frac{C_p(T)}{n^{p-1}} \Big[ K_0^p  + K_1^p \sup_{t\in [0,T]}  \mathbb{E}|y^n(t)|^{2p}
+ \eps^p \; K_2^p \sup_{t\in [0,T]}  \mathbb{E}\Vert y^n(t)\Vert ^{2p}\Big] \frac{T}{n}
\leq \frac{C_p(T)}{n^p}.
\end{align*}
Let the assumptions of (ii) hold true and  let $\zeta^n(t)={\rm curl } \, z^n(t)$; then
\begin{align*} \zeta^n(t)\; =\; & {\rm curl } \, u_0 -\int_0^t \big[ (1-\eps) A\zeta^n(s) +
 {\rm curl } \,B(u^n(s),u^n(s)) + {\rm curl } \,R(s,u^n(s))\big]\,ds\\
&- \eps\;  \int_0^{d_n(t)} A\eta^n(s)\,ds + \int_0^t {\rm curl } \,  G(s,y^n(s)) dW(s).
\end{align*}
Therefore, $\zeta^n(t)-\xi^n(t)=\int_{d_n(t)}^t {\rm curl } \, G(s,y^n(s)) dW(s)$; the It\^o Lemma, the Fubini Theorem and
Assumption {\bf (G2)}  imply that
\begin{align*}
\int_0^T \mathbb{E}|\zeta^n(t)-\xi^n(t)|^2\,ds &= \int_0^T \mathbb{E}\int_{d_n(t)}^t \big| {\rm curl } \, G(s,y^n(s))\big|_{\mathcal{T}_2(k,H)}^2\,ds dt\\
&\leq \int_0^T \big[K_0+K_1 |\eta^n(s)|^2 +  \eps\;  K_2 |A y^n(s)|^2\big] \Big( \int_s^{d^*_n(s)} dt\Big)\,ds\\
&\leq \frac{T}{n} \Big[ K_0 T +  \mathbb{E}\int_0^T \big( K_1 |\eta^n(t)|^2 +  \eps\;  K_2 |A y^n(t)|^2\big) dt\Big].
\end{align*}
Hence, Lemma \ref{momentsxieta} and \eqref{z-u_H} applied with $p=1$ yield
\[ \int_0^T \mathbb{E}\Vert z^n(t)-u^n(t)\Vert ^2 dt \leq \frac{C(T)}{n}. \]
This upper estimate and Proposition \ref{prop-yn-un_V} conclude the proof of \eqref{z-u_L2V}.
\end{proof}
\bigskip

\section{Speed of Convergence for the scheme in the case of the Stochastic NSEs
on a $2$D torus } \label{sec-convergence}
In this entire section we consider the Stochastic NSEs  on a $2$D torus.
Thus, we suppose that  $u_0$ is ${\mathcal F}_0$-measurable such that
$\mathbb{E}\Vert u_0\Vert ^8<\infty$ and fix some $T>0$.
 We also assume that  Assumptions {\bf (G1)} and {\bf (G2)} are satisfied with $ K_2 < \frac{2}{7}$ and $ L_2<2$,
 and that the Assumptions
{\bf (R1)} and {\bf (R2)} hold true.
Our aim  is to  prove  the convergence of the scheme in the $H$ norm uniformly on the time grid and in $L^2(0,T;V)$.

The non linearity of the Navier-Stokes Equations
 requires   to impose a localization in order to obtain an $L^2(\Omega, {\mathbb P})$ convergence.
However, the probability of this localization set converges to 1, which yields the order 
of convergence in probability as defined
by Printems in \cite{Prin}; the order of convergence in probability of $u^n$ to $u$ in $H$
is $\gamma >0$ if
\[ \lim_{C\to \infty}  P\Big( \sup_{k=0, \cdots, n} |u^n(t_k)-u(t_k)| \geq \frac{C}{n^\gamma}\Big) =0.
\]
First, we prove that when properly localized, the schemes $u^n$ and $y^n$ converge to $u$ in $L^2(\Omega)$ for various topologies.
Thus, for every $M>0$,  $t\in [0,T]$ and any integer $n\geq 1$, let
\begin{equation}\label{Omega_M}
\Omega_{M}^n(t):=\left\{\omega\in\Omega\; : \; \int_{0}^{t}\big( \vert u(s,\omega)\vert _{\mathrm{X}}^{4}
+ \vert u^n(s,\omega)\vert _{\mathrm{X}}^{4}\big) \,ds\leq M\right\}.
\end{equation}
This definition shows that  $\Omega_M^n(t)\subset \Omega_M^n(s)$ for $s\leq t$ and that
$\Omega_M^n(t)\in {\mathcal F}_t$ for any
$t\in [0,T]$.  Furthermore, Lemma \ref{l1_S2} shows that  $\sup_{n\geq 1} P(\Omega_M^n(T)^c)\to 0$ as $M\to \infty$.
Let $\tau^n_M=\inf\{ t\geq 0\; : \; \int_{0}^{t}\big( \vert u(s,\omega)\vert _{\mathrm{X}}^{4}
+ \vert u^n(s,\omega)\vert _{\mathrm{X}}^{4}\big)\,ds\geq M\}\wedge T$; we clearly have $\tau_M^n=T$ on the set $\Omega_M^n(T)$.

The following proposition proves that, localized on the set $\Omega_M^n(T)$, the strong speed of convergence of  $z^n$  to $u$ (resp.
of $u^n$ and $y^n$ to $u$) in
$L^\infty(0,T;H)$ (resp. in $ L^2(0,T;V)$) is $1/2$.
\begin{Prop}\label{lzn-u}
Let  $  \eps\;  \in [0,1)$, $u_0$ be ${\mathcal F}_0$-measurable such that $\mathbb{E}\Vert u_0\Vert ^8<\infty$.
 Suppose  that the Assumptions {\bf (G1)}, {\bf (G2)}, {\bf (R1)}
and {\bf (R2)} hold with $K_2<\frac{2}{7}$,  $L_2<2$  with
 $ \eps\;  L_2$  strictly smaller than $2(1-\eps)$.
Then  there exist positive  constants  $C(T)$  and $\tilde{C}_2$ such that for every for every
$M>0$  and   $n\in\mathbb{N}$,  we have:
\begin{equation} \label{majo_zn-u}
\mathbb{E}\Big( \sup_{t\in [0, \tau^n_M]} \! |z^n(t)-u(t)|^2 + \int_0^{\tau^n_M} \!\!
\big[ \Vert u^n(t)-u(t)\Vert ^2 + \Vert y^n(t)-u(t)\Vert ^2\big] dt\Big)\leq \frac{K(T,M)}{n},
\end{equation}
where $z^n$ is defined by
\eqref{defzn}, $u$ is the solution to \eqref{SNS} and
\[ K(M,T)=C(T) \exp\big( C(T)  e^{\tilde{C}_2 M}\big). \]
\end{Prop}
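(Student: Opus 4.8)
The plan is to run an Itô/energy estimate on the error of the \emph{continuous} interpolant $z^n$ rather than on the discontinuous scheme itself, exploiting the representation \eqref{defzn} together with the grid identity \eqref{uyz_n}. Write $e^n=z^n-u$, where $u$ solves \eqref{SNS}. Since $z^n-u^n=\int_{d_n(\cdot)}^{\cdot}G(s,y^n(s))\,dW(s)$ is, by Lemma \ref{lem_zn-un}, small both in $L^\infty(0,T;H)$ (part (i), $p$ large) and in $L^2(0,T;V)$ (part (ii), giving \eqref{z-u_L2V}), and $\|y^n-u^n\|$ is small by Proposition \ref{prop-yn-un_V}, one may replace $z^n$ and $y^n$ by $u^n$ inside the bounded and coercive terms at the cost of remainders of order $1/n$. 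First I would apply Itô's formula to $|e^n(t\wedge\tau^n_M)|^2$: for $\eps=0$ the process $z^n$ is continuous and this is direct, whereas for $\eps\in(0,1)$ the map $t\mapsto-\eps\int_0^{d_n(t)}Ay^n$ is a step function, so $z^n$ has jumps $-\eps\int_{t_{k-1}}^{t_k}Ay^n\,ds$ at the (deterministic) grid points that must be added separately; their squared sum is $\le\eps^2\tfrac{T}{n}\int_0^T|Ay^n|^2\,ds$, hence $O(1/n)$ in expectation by Lemma \ref{momentsxieta}.

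Next I would group the drift contributions. Splitting the viscosity mismatch as $(1-\eps)Au^n-Au=(1-\eps)A(u^n-u)-\eps Au$ and replacing $e^n$ by $u^n-u$, the first part produces the coercive term $-2(1-\eps)\|u^n-u\|^2$ (up to a cross term $\langle A(u^n-u),z^n-u^n\rangle$ absorbed by Young together with $\int\|z^n-u^n\|^2=O(1/n)$ from \eqref{z-u_L2V}), while the piece $2\eps\langle Au,u^n-u\rangle$ and the short-interval term $-\eps\int_{d_n(t)}^tAy^n$ are dominated, via the monotonicity \eqref{ineq-new01}--\eqref{ineq-new02} and the $L^2(0,T;D(A))$-bounds of Lemma \ref{momentsxieta}, by $O(1/n)$. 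The Itô correction $\|G(s,y^n)-G(s,u)\|^2_{\mathcal T_2(K,H)}\le L_1|y^n-u|^2+\eps L_2\|y^n-u\|^2$ is handled through $\|y^n-u\|^2\le2\|u^n-u\|^2+2\|y^n-u^n\|^2$, so that its leading part is $2\eps L_2\|u^n-u\|^2$ plus an $O(1/n)$ remainder (Proposition \ref{prop-yn-un_V}); the hypothesis $\eps L_2<2(1-\eps)$ is exactly what keeps the net coefficient $2(1-\eps)-2\eps L_2>0$ and preserves coercivity. The Lipschitz term in $R$ is controlled by Assumption (R1).

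The decisive term is the nonlinearity. Writing $\langle B(u^n,u^n)-B(u,u),e^n\rangle=\langle B(u^n,u^n)-B(u,u),u^n-u\rangle+\langle B(u^n,u^n)-B(u,u),z^n-u^n\rangle$, the first summand is estimated by \eqref{Blip} as $\eta\|u^n-u\|^2+C_\eta|u^n-u|^2|u^n|_{\mathrm X}^4$, the $\eta$-part being absorbed into the coercive term; the second summand is controlled by \eqref{B0}, \eqref{inter} and the $L^2(0,T;V)$-smallness of $z^n-u^n$. After absorbing the martingale $\int\langle e^n,(G(y^n)-G(u))\,dW\rangle$ by the Burkholder--Davis--Gundy inequality and Young into $\tfrac12\mathbb{E}\sup|e^n|^2$, this leaves an inequality of Gronwall type $\mathbb{E}\sup_{s\le t\wedge\tau^n_M}|e^n|^2+\mathbb{E}\int_0^{t\wedge\tau^n_M}\|u^n-u\|^2\le\tfrac{C}{n}+C\,\mathbb{E}\int_0^{t\wedge\tau^n_M}|u^n(s)|_{\mathrm X}^4\,|e^n(s)|^2\,ds$. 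Here lies the crux: the multiplier $|u^n|_{\mathrm X}^4$ is random and only its time integral is controlled, by construction of $\tau^n_M$, through $\int_0^{\tau^n_M}(|u|_{\mathrm X}^4+|u^n|_{\mathrm X}^4)\,ds\le M$. I would therefore close the estimate with a (stochastic) Gronwall argument localized on $\Omega_M^n(T)$ — for instance using the integrating factor $\exp(-C\int_0^t|u^n|_{\mathrm X}^4)$ — which converts the $1/n$ source into $K(M,T)/n$ of the announced form, the exponential dependence on $M$ being inherited from the localized integral of $|u^n|_{\mathrm X}^4$.

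The main obstacle I expect is precisely this last step: the nonlinear term forces a Gronwall inequality whose multiplier is not pointwise bounded, so the passage from the pathwise energy identity to an $L^2(\Omega)$ bound must be carried out carefully up to $\tau^n_M$ (equivalently on $\Omega_M^n(T)$), and tracking the constants through the BDG/Young absorption and the random Gronwall is what produces the double-exponential dependence $K(M,T)=C(T)\exp\!\big(C(T)e^{\tilde{C}_2 M}\big)$. A secondary, more clerical difficulty is keeping the several $\eps$-dependent coercive and parabolic contributions aligned so that $\eps L_2<2(1-\eps)$ suffices, and, for $\eps>0$, making the jump terms of $z^n$ rigorous.
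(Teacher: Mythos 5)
Your proposal follows essentially the same route as the paper's proof: It\^o's formula applied to $|z^n-u|^2$ stopped at $\tau^n_M$, replacement of $z^n$ and $y^n$ by $u^n$ at $O(1/n)$ cost via Lemma \ref{lem_zn-un} and Proposition \ref{prop-yn-un_V}, the coercivity bookkeeping that uses $\eps L_2<2(1-\eps)$, the splitting of the nonlinearity through \eqref{Blip} and \eqref{B0}, BDG absorption of the martingale, and a Gronwall argument with the random multiplier $|u|_{\mathrm X}^4+|u^n|_{\mathrm X}^4$ whose time integral is bounded by $M$ up to $\tau^n_M$, yielding the double-exponential constant. Your explicit treatment of the jumps of $z^n$ at the grid points when $\eps>0$ is a technical point the paper passes over silently, but it does not alter the argument.
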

\begin{proof} First of all let us observe that in view of Proposition \ref{prop-yn-un_V} we only have to prove inequality \eqref{majo_zn-u} for the first two terms on the left hand side.  For this aim
let us fix $M>0$ and a natural number $n\geq 1$. To ease notation we put $\tau:=\tau_M^n$. Then  we have
\begin{align}
z^n(t\wedge \tau)-u(t\wedge \tau)=&-\int_0^{t\wedge \tau} [F_\eps(s,u^n(s)) - F(s,u(s))]\,ds -  \eps\;  \int_0^{d_n(t\wedge \tau)}
\!\! A y^n(s)\,ds  \nonumber \\
& + \int_0^{t\wedge \tau} [ G(s,y^n(s)) - G(s,u(s)) ] dW(s), \mbox{ for any $t\in [0,T]$}. \label{zn-u}
\end{align}
The It\^o Lemma  yields  that
\[|z^n(t\wedge \tau)-u(t\wedge \tau )|^2 = \sum_{i=1}^4 T_i(t) + I(t),\;\; \mbox{ for }t\in [0,T],\]
 where, also for $t\in [0,T]$,

\begin{align}
I(t)\; =\; & 2\int_0^{t\wedge \tau} \lb [G(s,y^n(s)) - G(s,u(s))] \,dW(s)\, , \, z^n(s)-u(s)\rb,   \label{It}\\
T_1(t)\; =\; & -2\int_0^{t\wedge \tau} \lb F_{\eps} (s,u^n(s)) - F_\eps(s,u(s)) \, , \, z^n(s)-u(s)\rb\,ds, \nonumber \\
T_2(t)\; =\; & -2  \eps\;  \int_0^{d_n(t\wedge \tau)} \lb Ay^n(s) - A u(s) \, , \, z^n(s)-  u(s)\rb \,\,ds, \nonumber \\
T_3(t)\; =\; & -2 \eps\;  \int_{d_n(t\wedge \tau)}^{t\wedge \tau}  \lb A u(s) \, , \, z^n(s)-u(s)\rb\,ds, \nonumber \\
T_4(t)\; =\; & \int_0^{t\wedge \tau} |G(s, y^n(s)) - G(s,u(s)) |_{\mathcal{T}_2(K,H)}^2\,ds.  \label{decompos_z_n-u}
\end{align}
We at first upper estimate the above term $T_1(t)$ as follows: $T_1(t)\leq \sum_{i=1}^5 T_{1,i}(t)$, where
\begin{align}
T_{1,1}(t) \,=\,  & -2(1-\eps) \int_0^{t\wedge \tau} \lb A u^n(s) - Au(s) \, , \, u^n(s)-u(s)\rb\,ds, \nonumber \\
T_{1,2}(t) \, = \, & -2(1-\eps) \int_0^{t\wedge \tau} \lb A u^n(s) - Au(s) \, , \, z^n(s)-u^n(s)\rb\,ds, \nonumber \\
T_{1,3}(t)\, =\, & -2\int_0^{t\wedge \tau} \lb B(u^n(s),u^n(s)) - B(u(s),u(s)) \, , \, u^n(s)-u(s)\rb\,ds, \nonumber \\
T_{1,4}(t)\, =\, & -2\! \int_0^{t\wedge \tau} \!\! \!  \Big\langle \big[ B(u^n(s)- u(s) ,u^n(s))
+ B(u(s),u^n(s)- u(s))  ,z^n(s)-u^n(s)\Big\rangle\,ds, \nonumber \\
T_{1,5}(t)\, =\, & -2(1-\eps) \int_0^{t\wedge \tau} \lb R(s, u^n(s)) - R(s,u(s)) \, , \, z^n(s)- u(s)
\rb\,ds.
\label{decompT1}
\end{align}
The definition of the $V$-norm implies that
\begin{equation} \label{T11}
T_{1,1}(t)=-2(1-\eps) \int_0^{t\wedge \tau} \Vert u^n(s)-u(s)\Vert ^2\,ds ,
\end{equation}
while the Cauchy-Schwarz and Young inequalities yield that for any $\eta >0$:
\begin{align}
T_{1,2}(t)&\leq  2(1-\eps) \int_0^{t\wedge \tau} \Vert u^n(s)-u(s)\Vert \; \Vert  z^n(s)-u^n(s)
\Vert \,\,ds \nonumber \\
&\leq \eta (1- \eps\; ) \int_0^{t\wedge \tau} \Vert u^n(s)-u(s)\Vert ^2\,ds + \frac{1-\eps}{\eta} \int_0^{t\wedge \tau} \Vert z^n(s)-u^n(s)\Vert ^2\,ds.
\label{T12}
\end{align}
Using \eqref{Blip} we deduce that for any $\eta>0$ we have:
\begin{align}
T_{1,3}(t)\leq & 2\eta \int_0^{t\wedge \tau} \Vert u^n(s)-u(s)\Vert ^2\,ds + 2 C_\eta \int_0^{t\wedge \tau} |u^n(s)-u(s)|^2 \vert u(s)\vert_\mathrm{X}^4\,ds
\nonumber \\
\leq & 2\eta \int_0^{t\wedge \tau}\Vert u^n(s)-u(s)\Vert ^2\,ds + 4 C_\eta \int_0^{t\wedge \tau} |z^n(s)-u(s)|^2 \vert u(s)\vert_\mathrm{X}^4\,ds\nonumber \\
& +4  C_\eta \int_0^{t\wedge \tau} |z^n(s)-u^n(s)|^2 \vert u(s)\vert_\mathrm{X}^4\,ds. \label{T13}
\end{align}
Furthermore, condition \eqref{B1} on $B$, property \eqref{inter} of the space $\mathrm{X}$ and  the Cauchy-Schwarz and Young inequalities yield for any $\eta >0$:

\begin{align}
T_{1,4}&(t)\leq  2 \int_0^{t\wedge \tau} \!\! \big[ 2\eta \|z^n(s)-u^n(s) \|^2 + C_\eta |u^n(s)-u(s)|_{ X}^2 \big( |u^n(s)|_{ X}^2
+ | u(s)|_{X}^2\big)\big] ds \nonumber \\
\leq & 8\eta \int_0^{t\wedge \tau}\|u^n(s)-u(s)\|^2 ds + 8\eta \int_0^{t\wedge \tau}\|z^n(s)-u(s)\|^2 ds \nonumber \\
& \;  + 2C C_\eta  \Big( \int_0^{t\wedge \tau}\|u^n(s)-u(s)\|^2 ds \Big)^{1/2} \Big( \int_0^{t\wedge \tau}|u^n(s)-z^n(s)|^4 ds \Big)^{1/4}
\nonumber \\
&\qquad \times
\Big( \int_0^{t\wedge \tau}  \big[ |u^n(s)|_{ X}^8  + |u(s)|_{ X}^8 \big] ds \Big)^{1/4}
\nonumber \\
&\; + 2 C C_\eta \Big( \int_0^{t\wedge \tau}\!\!\!  \|u^n(s)-u(s)\|^2 ds \Big)^{1/2}  \Big( \int_0^{t\wedge \tau} \!\! \! |z^n(s)-u(s)|^2
\big[ |u(s)|_{ X}^4 + |u^n(s)|_{ X}^4\big] ds \Big)^{1/2}\nonumber \\
 \leq &10\eta \int_0^{t\wedge \tau}\|u^n(s)-u(s)\|^2 ds +  8\eta \int_0^{t\wedge \tau}\|z^n(s)-u(s)\|^2 ds \nonumber \\
& + C \frac{C_\eta}{\eta}  \Big( \int_0^{t\wedge \tau}\!\!\! |u^n(s)-z^n(s)|^4 ds\Big)^{1/2} \Big( \int_0^{t\wedge \tau}\!\!\!
\big[ |u(s)|_{ X}^8 + |u^n(s)|_{X}^8\big] ds \Big)^{1/2} \nonumber \\
&+ C \frac{C_\eta}{\eta} \int_0^{t\wedge \tau}\!\!\! |z^n(s)-u(s)|^2 \big[ |u(s)|_{ X}^4 + |u^n(s)|_{ X}^4\big] ds .
\label{T14}
\end{align}

Finally, Assumption {\bf (R1)} and the triangular and  Cauchy Schwarz inequalities imply
\begin{align}
T_{1,5}(t)\leq & 2R_1 \int_0^{t\wedge \tau}|u^n(s)-u(s)|\, |z^n(s)-u(s)|\,ds\nonumber \\
\leq & 3 R_1 \int_0^{t\wedge \tau}|z^n(s)-u(s)|^2\,ds + R_1 \int_0^{t\wedge \tau}|z^n(s)-u^n(s)|^2\,ds. \label{T15}
\end{align}

First note that $T_2=-2\epsilon  \int_0^{t\wedge \tau} \langle \nabla (y^n(s)-u(s))\, , \,  \nabla (z^n(s)-u(s)) \rangle ds$.
Replacing $u$ by $u^n$, and using the Cauchy-Schwarz and Young inequalities we deduce
\begin{align}
T_2(t) \leq & - 2 \eps\! \int_0^{d_n(t\wedge \tau)} \!\! \Vert u^n(s)-u(s)\Vert ^2\,ds
+ 2 \eps\!  \int_0^{d_n(t\wedge \tau)}\!\!  \Vert y^n(s)-u^n(s)\Vert  \Vert z^n(s)-u^n(s)\Vert \,ds
\nonumber \\
& +  2 \eps\;  \int_0^{d_n(t\wedge \tau)} \Vert u^n(s)-u(s)\Vert  \big[ \Vert y^n(s)-u^n(s)\Vert   +\Vert z^n(s)-u^n(s)\Vert \big]\,ds
\nonumber \\
\leq & \; 2 \eps\;  \int_0^{d_n(t\wedge \tau)} \Vert y^n(s)-u^n(s)\Vert  \Vert z^n(s)-u^n(s)\Vert \,ds.
\label{T2}
\end{align}

The Cauchy Schwarz and Young inequality imply that for every $\eta >0$, we have
\begin{align}
T_3(t)\leq & 2 \eps\;  \int_{d_n(t\wedge \tau)}^{t\wedge \tau} \Vert u(s)\Vert  \big( \Vert z^n(s)-u^n(s)\Vert  + \Vert  u^n(s)-u(s)\Vert \big)\,ds \nonumber \\
\leq &  \eps\;  \eta \int_{d_n(t\wedge \tau)}^{t\wedge \tau} \Vert u^n(s)-u(s)\Vert ^2\,ds \nonumber \\
& + \frac{\eps}{\eta} \Big( \int_{d_n(t\wedge \tau)}^{t\wedge \tau} \Vert u(s)\Vert ^2\,ds \Big)^{1/2} \Big( \int_{d_n(t\wedge \tau)}^{t\wedge \tau}
\Vert z^n(s)-u^n(s)\Vert ^2\,ds \Big)^{1/2}. \label{T3}
\end{align}
Finally, since $ \eps\;  L_2 < 2(1-\eps) $ we can choose $\alpha >0$ such that $ \eps\;  L_2 < 2(1-\eps) -(2+L_2)\alpha$;
thus the Assumption {\bf (G1)} yields
\begin{align}
T_4(t)& \leq  L_1 \int_0^{t\wedge \tau} \!\! \big[ |y^n(s)-u(s)|^2 + \eps\;  L_2 \Vert  y^n(s)-u(s)\Vert ^2\big]\,ds \nonumber \\
&  \leq  2 L_1 \int_0^{t\wedge \tau} \!\! |z^n(s)-u(s)|^2\,ds +  \eps\;  L_2(1+\alpha) \int_0^{t\wedge \tau} \!\!
\|u^n(s)-u(s)\|^2\,ds
\nonumber\\
& \; +2 L_1 \int_0^{t\wedge \tau}  \!\! |y^n(s)-z^n(s)|^2\,ds
 +  \eps\;  L_2 \Big(1+\frac{1}{\alpha}\Big)
\int_0^{t\wedge \tau} \!\! \Vert y^n(s)-u^n(s)\Vert ^2\,ds.  \label{T4}
\end{align}
Fix $\eta >0$ such that   $13 \eta <\alpha$.   Put
 \[X(t):=\sup_{s\in [0,t\wedge \tau ]} |z^n(s)-u(s)|^2\, , \;\;
Y(t):=\int_0^{t\wedge\tau} \Vert u^n(s)-u(s)\Vert ^2\,ds, \mbox{ for } t\in [0,T]. \]
Then  inequalities \eqref{zn-u}--\eqref{T4} imply that
\[
X(t)+ \alpha Y(t)\leq \int_0^{t\wedge \tau}\!\!  \varphi(s) X(s)\,ds + \sup_{s\in [0,t\wedge \tau]} |I(t)| + Z(t),
\mbox{ for  $t\in [0,T]$,}
\]
where  $I(t)$ is defined by  \eqref{It}, while the processes  $\varphi$ and $Z$ are defined as follows:
\begin{align*}
\varphi(s)=& 4C_\eta \vert u(s)\vert_\mathrm{X}^4 + C \frac{C_\eta}{\eta} \big( \vert u(s)\vert_\mathrm{X}^4 +\vert u^n(s)\vert_\mathrm{X}^4 \big)
+3R_1+2L_1, \\
Z(t)=&  \int_0^{t\wedge \tau} \Big[  \Big( \frac{1-\eps}{\eta} + 2 \eps +4 L_{1}\; \Big)
\Vert z^n(s)-u^n(s)\Vert ^2
+  |z^n(s)-u^n(s)|^2 \big( R_1+ 4 C_\eta \vert  u(s)\vert_\mathrm{X}^4)\\
&   + \Big[ \eps\;  L_2 \Big(  1+ \frac{1}{\alpha}\Big) + 2 \eps +4L_{1}\; \Big ]
\Vert y^n(s)-u^n(s)\Vert ^2 \Big]\,ds \\
& + \frac{C_\eta}{\eta} C \Big( \int_0^{t\wedge \tau} |u^n(s)-z^n(s)|^4\,ds\Big)^{1/2}
\Big(\int_0^{t\wedge \tau} [\vert u^n(s)\vert_\mathrm{X}^8 + \vert u(s)\vert_\mathrm{X}^8]\,ds
\Big)^{1/2}  \\
&+ \frac{\eps}{\eta} \Big( \int_{d_n(t)\wedge \tau}^{t\wedge \tau} \Vert u(s)\Vert ^2\,ds\Big)^{1/2}
\Big( \int_{d_n(t)\wedge \tau}^{t\wedge \tau} \Vert z^n(s)-u^n(s)\Vert ^2\,ds\Big)^{1/2}.
\end{align*}
The definition of the stopping time $\tau$ implies the existence of constants $C_1$ and $C_2$
 larger than 1 and independent of $n$ such that:
\[ \int_0^\tau  \varphi(s)\,ds \leq (3R_1+2L_1)T + M\big(4C_\eta + C \frac{C_\eta}{\eta}\big)=C_1T+C_2M:= C(\varphi).\]
Since $V\subset \mathrm{X}$,  using Propositions \ref{prop_y-u}, \ref{prop-yn-un_V} and  
Lemma \ref{lem_zn-un},
we deduce the existence of a constant
$C(T)$ depending on $T$ (and not on $n$) such that

\begin{align}
\mathbb{E}[Z(T)]\leq & \Big[ \frac{1-\eps}{\eta} + R_1+4 \eps\;  + 8L_1 +  \eps\;  L_2 \big(1+\frac1\alpha\big)\Big] \frac{C}{n} \nonumber \\
& +  4 C_\eta \Big( \mathbb{E}\int_0^T |z^n(s)-u^n(s)|^4\,ds\Big)^{1/2} \Big( {\mathbb E} \int_0^T \Vert u(s)\Vert ^8\,ds\Big)^{1/2} \nonumber\\
& + C \frac{C_\eta}{\eta} \Big( \mathbb{E}\int_0^{T\wedge \tau} |u^n(s)-z^n(s)|^4\,ds\Big)^{1/2}
\Big( \mathbb{E}\int_0^{T\wedge \tau} \big( \Vert u^n(s)\Vert ^8 + \Vert u(s)\Vert ^8 \big)\,ds\Big)^{1/2} \nonumber\\
&+ \frac\eps\eta \Big( \frac{T}{n} \sup_{s\in [0,T]} \mathbb{E}\Vert u(s)\Vert ^2 \Big)^{1/2} \Big(\mathbb{E}\int_0^T \Vert z^n(s)-u^n(s)\Vert ^2ds\Big)^{1/2}
\leq \frac{C(T)}{n}. \label{majoEZ}
\end{align}
Furthermore, Assumption {\bf(G1)}, the Burkholder-Davies-Gundy,   the Cauchy-Schwarz and the Young inequalities   imply that for any $\beta >0$:

\begin{align} \label{majoI(t)}
\mathbb{E}\Big( & \sup_{s\leq t \wedge \tau}|I(s)|\Big) \leq 12
\mathbb{E}\Big( \int_0^{t\wedge \tau} \Vert G(s,y^n(s))- G(s,u(s))\Vert _{\mathcal{T}_2(K,H)}^2
|z^n(s)-u(s)|^2\,ds \Big)^{1/2}\nonumber \\
&\leq \beta \mathbb{E}\Big(\sup_{s\leq t\wedge \tau} |z^n(s)-u(s)|\Big)
+ \frac{36}{\beta} \mathbb{E}\int_0^{t\wedge \tau} \!\!\! \big[ L_1|y^n(s)-u(s)|^2 +
 \eps\;  L_2 \Vert y^n(s)-u(s)\Vert ^2\big]\,ds  \nonumber \\
&\leq \beta \mathbb{E}X(t) + \frac{36 L_1 (1+\delta_1)}{\beta} \int_0^t \mathbb{E} X(s)\,ds
+ \frac{36  \eps\;  L_2 (1+\delta_2)}{\beta}\; \mathbb{E} Y(t)
+ \frac{\tilde{C}}{n},
\end{align}
where $\tilde{C}=C \Big( \frac{36 L_1}{\beta (1+\delta_1)}
+ \frac{36  \eps\;  L_2}{\beta (1+\delta_2)}\Big) $.
Choose $\beta>0$ such that $2\beta \big(1+C(\varphi) e^{C(\varphi)}\big) =1$.
 Then suppose that $ \eps\;  L_2$ is small enough to ensure that
  \[72 (1+\delta_2)  \eps\;  \frac{1}{\beta} L_2\big(1+ C(\varphi) e^{C(\varphi)}\big)  \leq \alpha.\] Using the argument similar to that used
  in the  proof of Lemma 3.9 in \cite{DuanMil} we deduce that
\[ X(t)+\alpha Y(t)\leq \big[ Z+\sup_{s\leq t\wedge \tau} | I(s) | \big] \big(1+C(\varphi) e^{C(\varphi)}  \big).\]
Then taking expectation, using \eqref{majoI(t)},  the Gronwall lemma and \eqref{majoEZ}, we deduce
\begin{align*}
\mathbb{E} X(T) + {\alpha} \mathbb{E}Y(T) &\leq  2\big( 1 + C(\varphi)e^{C(\varphi)}\big)\Big[  \mathbb{E} (Z) + \frac{\tilde C}{n} \Big]
\exp\Big[ {144} L_1 (1+\delta_1) \big(1+C(\varphi) e^{C(\varphi)} \big)^2\Big]
\\
&\leq \frac{C(T)+\tilde{C}}{n}  \exp\big[  C(T) e^{3 C_2 M}\big]
\end{align*}
for some positive constant $C(T)$ which does not depend on $n$. This completes the proof of \eqref{majo_zn-u}
\end{proof}

The following theorem proves that, properly localized, the sequences $u^n$ and $y^n$
converge strongly to $u$ in $L^2(\Omega, {\mathbb P})$
for various topologies, and that the "localized" speed of convergence of these processes  is $1/2$.

\begin{theorem}\label{th-speed}
Let $  \eps\;  \in [0,1)$; suppose that Assumptions {\bf (R1)}, {\bf (R2)}, {\bf (G1)}  and {\bf (G2)} hold with $\ L_2$ small enough and
$K_2<\frac27$, and let $u_0$ be ${\mathcal F}_0$-measurable with $\mathbb{E}\Vert  u_0\Vert ^8<\infty$. Then
the processes $u^n$ and $y^n$ defined in section \ref{desscheme} converge to the solution $u$ to the stochastic
Navier-Stokes Equations \eqref{SNS} on a $2$-D torus.
 More precisely, given any $M>0$ there exist   positive constants $C(T)$ and $\tilde{C}_2$,  which do not depend on $n$ and $M$,
 such that for    any integer  $n= 1, \cdots$  we have
\begin{align}\label{speed_sup}
& \mathbb{E}\Big[ 1_{\Omega_M^n( T  )} \sup_{k=0, \cdots, n-1}
\Big( \sup_{s\in [t_k,t_{k+1})} \big( |u^n(s^+) - u(s)|^2 + |y^n(s^+)-u(s)|^2 \big)  \Big)  \Big]  \leq \frac{K(M,T)}{n}
\\
&  \mathbb{E}\Big[ 1_{\Omega_M^n(t)}  \int_0^t \!\!\big[ \Vert  u^n(s) - u(s)\Vert ^2  +  \Vert y^n(s)-u(s)\Vert ^2\big]ds \Big)  \Big]
\leq \frac{K(M,T)}{n}, \label{speed-L2}
\end{align}
where  $K(M,T):=C(T) \exp\big[ C(T)  e^{3M}\big]$,
 and
 \[ \Omega_M^n(t)= \Big\{ \omega : \int_0^t \big( \vert  u(s)(\omega)\vert_\mathrm{X}^4 + \vert u^n(s)(\omega)\vert ^4_\mathrm{X}\big)\,ds
\leq M \Big\} \; \mbox{\rm  for  $t\in [0,T]$.}\]
\end{theorem}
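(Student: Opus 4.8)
The plan is to reduce both \eqref{speed_sup} and \eqref{speed-L2} to Proposition~\ref{lzn-u}, which already controls $z^n-u$ on the stopped interval $[0,\tau_M^n]$, by expressing the two components of the scheme through $z^n$. From \eqref{equality_u_2}, \eqref{equality_y_2} and \eqref{defzn} one reads off the two remainders
\[
z^n(t)-u^n(t)=\int_{d_n(t)}^{t}G(s,y^n(s))\,dW(s),\qquad
z^n(t)-y^n(t)=\int_{t}^{d_n^\ast(t)}F_\eps(s,u^n(s))\,ds+\eps\int_{d_n(t)}^{t}Ay^n(s)\,ds,
\]
the first a genuine martingale increment and the second a purely deterministic one. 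The elementary but crucial fact is that $\Omega_M^n(t)\subset\{\tau_M^n\geq t\}$, so that on $\Omega_M^n(t)$ one has $t\wedge\tau_M^n=t$. The bound \eqref{speed-L2} is then immediate: on $\Omega_M^n(t)$ the integral $\int_0^t(\|u^n-u\|^2+\|y^n-u\|^2)\,ds$ is dominated by $\int_0^{\tau_M^n}(\|u^n-u\|^2+\|y^n-u\|^2)\,ds$, whose expectation is $\leq K(M,T)/n$ by \eqref{majo_zn-u}.

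For \eqref{speed_sup} I would split, for $s\in[t_k,t_{k+1})$, using $|u^n(s^+)-u(s)|^2\leq 2|z^n(s)-u(s)|^2+2|z^n(s)-u^n(s)|^2$ and $|y^n(s^+)-u(s)|^2\leq 2|z^n(s)-u(s)|^2+2|z^n(s)-y^n(s)|^2$. On $\Omega_M^n(T)$ we have $\tau_M^n=T$, so $\mathbb{E}\big[1_{\Omega_M^n(T)}\sup_{s<T}|z^n(s)-u(s)|^2\big]\leq \mathbb{E}\sup_{s\leq\tau_M^n}|z^n(s)-u(s)|^2\leq K(M,T)/n$, again by \eqref{majo_zn-u}. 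The $y^n$-contribution is harmless because $z^n-y^n$ is deterministic: Cauchy--Schwarz over an interval of length $\leq T/n$ gives $\sup_{s\in[t_k,t_{k+1})}|z^n(s)-y^n(s)|^2\leq C\frac{T}{n}\int_{t_k}^{t_{k+1}}\big(|F_\eps(s,u^n(s))|^2+|Ay^n(s)|^2\big)\,ds$, and here the maximum over $k$ may be crudely dominated by the sum over $k$, which together with the $D(A)$-moment estimates of Lemma~\ref{momentsxieta} (and the $2$D interpolation bound for the $B$-term inside $F_\eps$) yields $\mathbb{E}\sup_{s<T}|z^n-y^n|^2\leq C/n$.

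The main obstacle is the last term, $\mathbb{E}\sup_{s<T}|z^n(s)-u^n(s)|^2=\mathbb{E}\max_{0\leq k<n}\sup_{s\in[t_k,t_{k+1})}\big|\int_{t_k}^{s}G(r,y^n(r))\,dW(r)\big|^2$. In contrast with the deterministic remainder, the bound $\max_k\leq\sum_k$ is useless here, since each subinterval contributes a quantity of order $1/n$ whose sum is of order $1$; the running maximum of a martingale increment at scale $1/n$ is intrinsically of borderline size. I would first apply Doob's or the Burkholder--Davis--Gundy inequality on each subinterval, together with Assumption~{\bf (G1)} and the high moment bounds of Lemmas~\ref{l4_S2} and~\ref{momentsxieta}, to get $\mathbb{E}\big[\sup_{s\in[t_k,t_{k+1})}\big|\int_{t_k}^{s}G\,dW\big|^{2p}\big]\leq C(T/n)^p$, and then pass to the maximum over $k$ via $\sup_k a_k\leq(\sum_k a_k^p)^{1/p}$. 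Squeezing this running maximum down to the claimed order $1/n$, rather than the weaker $n^{-(1-1/p)}$ produced by a naive $\ell^p$-over-$\ell^\infty$ estimate, is precisely the delicate step, and it is exactly here that the strong integrability $\mathbb{E}\|u_0\|^8<\infty$ and the stochastic-parabolicity restriction $K_2<\tfrac27$ (which together permit $p$ up to $4$) must be exploited. Assembling the three contributions and taking expectations then delivers \eqref{speed_sup}.
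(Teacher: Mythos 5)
Your treatment of \eqref{speed-L2} is exactly the paper's: on $\Omega_M^n(t)$ one has $\tau_M^n\geq t$, so the bound is read off directly from \eqref{majo_zn-u}. For \eqref{speed_sup} your decomposition through $z^n$ differs in form but not in substance from the paper's, which goes through the grid values: the paper first obtains the uniform-in-$k$ bound \eqref{speed-supgrid} for $|u^n(t_k^+)-u(t_k)|^2+|y^n(t_k^-)-u(t_k)|^2$ (this is where $z^n(t_k)=u^n(t_k^+)=y^n(t_k^-)$, Proposition \ref{lzn-u}, and a crude $\sup_k\leq\sum_k$ estimate of $\eps\int_{t_k}^{t_{k+1}}Ay^n(s)\,ds$ via \eqref{etan} enter), and then controls the intra-interval oscillations $|y^n(t)-y^n(t_k^+)|$, $|u^n(t)-u^n(t_k^+)|$, $|u(t)-u(t_k^+)|$ in \eqref{yn-yn}--\eqref{un-un}, each by $C(T)/n$ \emph{for each fixed $k$}. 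Your deterministic remainder $z^n-y^n$ is handled correctly by the same $\max_k\leq\sum_k$ device the paper uses for its deterministic terms, granted the $D(A)$-moment bounds of Lemma \ref{momentsxieta}.

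The gap is the one you yourself flag: the expectation of the maximum over $k$ of the martingale oscillation $\sup_{s\in[t_k,t_{k+1})}\big|\int_{t_k}^{s}G(r,y^n(r))\,dW(r)\big|^2$. Your proposed route, BDG at order $2p$ followed by $\sup_k a_k\leq(\sum_k a_k^p)^{1/p}$, yields only $n^{-(1-1/p)}$, and neither $\mathbb{E}\Vert u_0\Vert^8<\infty$ nor $K_2<\tfrac27$ allows $p\to\infty$; even an exponential-martingale or Doob argument would at best give $\log n/n$. So, as written, your argument does not reach the claimed rate $1/n$ in \eqref{speed_sup}, and the appeal to the hypotheses as the place where the miracle ``must be exploited'' is not a proof. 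You should be aware, though, that the paper's own proof is no more complete on exactly this point: \eqref{yn-yn}, \eqref{u-u} and \eqref{un-un} are per-interval estimates, and the closing sentence combines them with \eqref{speed-supgrid} without justifying the passage of $\max_k$ inside the expectation. You have therefore not closed the argument, but you have located precisely the step the paper glosses over; an honest repair either accepts a logarithmic loss, states \eqref{speed_sup} with the supremum over $k$ outside the expectation (i.e.\ uniformly in $k$), or pays a worse exponent through the $\ell^p$ interpolation you describe.
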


\begin{proof} First note that on $\Omega_M^n(t)$  we have  $\tau^n_M\geq t$.
Hence using Propositions \ref{lzn-u},
we deduce that \eqref{speed-L2} holds true.
Furthermore, the Cauchy Schwarz inequality and \eqref{etan} prove that
\begin{align*}
\mathbb{E}\Big( \sup_{k=1, \cdots, n} |z^n(t_k)-y^n(t_k^-)|^2 \Big) &=
\mathbb{E}\Big( \sup_{k=1, \cdots, n} \eps^2 \Big| \int_{t_k}^{t_{k+1}} A y^n(s)\,ds \Big|^2 \Big)  \\
& \leq \eps^2 \frac{T}{n} \mathbb{E}\Big( \sup_{k=1, \cdots, n}
  \int_{t_k}^{t_{k+1}} \big|A y^n(s)\big|^2\,ds  \Big)
\leq \frac{C(T) \eps^2}{n}.
\end{align*}
Therefore, since $ z^n(t_k) =u^n(t_k^+)=y^n(t_k^-)$, Proposition \ref{lzn-u} yields
\begin{equation} \label{speed-supgrid}
\mathbb{E} \Big[ 1_{\Omega_M^n(T )} \sup_{k=0, \cdots, n} \big( |u^n(t_k^+) - u(t_k)|^2 + |y^n(t_k^-)-u(t_k)|^2 \big)\Big] \leq
\frac{K(M,T)}{n}.
\end{equation}
Finally, using Assumption {\bf (G1)}, Young's inequality and Lemma \ref{momentsxieta}, we obtain that for any $k=0, \cdots, n-1$:
\begin{align*}
\mathbb{E}\Big(& \sup_{t\in (t_k,t_{k+1})} |y^n(t)-y^n(t_k^+)|^2 \Big) \leq
 \mathbb{E}\Big[  \int_{t_k}^{t_{k+1}}\!\! \big( \frac{\eps}{2}  \Vert y^n(t_k^-)\Vert ^2
+  \big[ K_0+K_1 |y^n(s)|^2 +  \eps\;  K_2 \Vert y^n(s\Vert ^2\big]\big) \,ds \\
&\qquad + \Big(\int_{t_k}^{t_{k+1}} |y^n(s)-y^n(t_k^+)|^2 \big[ K_0 + K_1 |y^n(s)|^2 + K_2 \;  \eps\;  \Vert y^n(s)\Vert ^2\big]\,ds \Big)^{1/2}\Big]
 \\
&\leq \frac{1}{2} \mathbb{E}\Big( \sup_{t\in (t_k,t_{k+1})} |y^n(t)-y^n(t_k^+)|^2 \Big) + C \frac{T}{n} \sup_{s\in [0,T]} \mathbb{E}\Vert y^n(s)\Vert ^2,
\end{align*}
so that
\begin{equation} \label{yn-yn}
\mathbb{E}\Big( \sup_{t\in (t_k,t_{k+1})} |y^n(t)-y^n(t_k^+)|^2 \Big) \leq \frac{C(T)}{n}.
\end{equation}
Using the It\^o Lemma, the  inequalities \eqref{B0} and \eqref{SNSinV}, the Schwarz and Young inequality, the  Assumptions {\bf (G1)}
and the Burkholder Davies Gundy inequality, we deduce that
\begin{align*}
\mathbb{E}\Big( & \sup_{t\in (t_k, t_{k+1}) }|u(t)-u(t_k^+)|^2\Big) \leq
\mathbb{E}\Big( \sup_{t\in (t_k, t_{k+1}) } \int_{t_k}^t \Big[ - 2 \Vert u(s)\Vert ^2 + 2 \Vert  u(s) \Vert   \Vert  u(t_k^+)\Vert  \\
& \qquad + \vert  u(s)\vert_\mathrm{X}^2 \Vert  u(s)-u(t_k^+)\Vert ^2
+(R_0+R_1 |u(s)|)\,  |u(s)-u(t_k^+)| \\
& \qquad + \big( K_0+K_1 |u(s)|^2 +  \eps\;  K_2 \Vert  u(s) \Vert ^2\big) \Big]\,ds \Big) \\
&\quad  + \mathbb{E} \Big( \int_{t_k}^{t_{k+1}} |u(s)-u(t_k^+)|^2
\big[ K_0+K_1 |u(s)|^2 +  \eps\;  K_2 \Vert u(s)\Vert ^2 \big]\,ds \Big)^{1/2}\\
& \leq \frac{1}{2} \mathbb{E}\Big(  \sup_{t\in (t_k, t_{k+1}) }|u(t)-u(t_k^+)|^2\Big)  +\frac{C}{n} \Big( 1+ \sup_{t\in [0,T]} \mathbb{E} \Vert u(s)\Vert ^4 \Big),
\end{align*}
and hence
\begin{equation} \label{u-u}
\mathbb{E}\Big(  \sup_{t\in (t_k, t_{k+1}) }|u(t)-u(t_k^+)|^2\Big) \leq \frac{C(T)}{n} .
\end{equation}
A similar simpler argument using the inequalities \eqref{B0} and \eqref{xin} yields
\begin{equation}  \label{un-un}
 \mathbb{E}\Big(  \sup_{t\in (t_k, t_{k+1}) }|u^n(t)-u^n(t_k^+)|^2\Big) \leq \frac{C}{n} \Big( 1+ \sup_{t\in [0,T]} [ \mathbb{E} \Vert u^n(s)\Vert ^4
+ \Vert  y^n(t)\Vert ^4] \Big)\leq \frac{C(T)}{n}.
\end{equation}
The inequalities \eqref{yn-yn}--\eqref{un-un} and \eqref{speed-supgrid} conclude the proof of \eqref{speed_sup}.
\end{proof}

\begin{corollary}\label{cv-proba-positive}
Let $  \eps\;  \in [0,1)$; assume that the assumptions of Theorem \ref{th-speed} are satisfied.
For any integer $n\geq 1$ let  $\tilde{e}_n(T)$  denote the error term defined by
\[ \tilde{e}_n(T)=\sup_{k=1,\cdots,n } |u^n(t_k) - u(t_k)| + \Big( \int_0^T \!\! \Vert  u^n(s) - u(s)\Vert ^2 \,ds \Big)^{1/2}
+ \Big(\int_0^T \!\! \Vert y^n(s)-u(s)\Vert ^2\,ds \Big)^{1/2}.\]
Then $\tilde{e}_n(T)$ converges to 0 in probability with the speed almost $1/2$.  To be precise,  for any sequence $\big(z(n)\big)_{n=1}^\infty $  converging  to $\infty$,
\begin{equation} \label{speed-proba-0}
\lim_{n \to \infty} \mathbb{P}\Big( \tilde{e}_n(T)\geq \frac{z(n)}{\sqrt{n}}\Big) = 0.
\end{equation}
Therefore, the scheme $u^n$ converges to $u$ in probability in $H$ with rate 1/2.
\end{corollary}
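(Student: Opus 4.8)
The plan is to deduce the corollary from Theorem~\ref{th-speed} by a localization argument that turns the $L^2(\Omega)$-estimates, which are valid \emph{only on the sets} $\Omega_M^n(T)$, into a genuine statement about convergence in probability. For every $M>0$ I would start from the elementary splitting
\[
\mathbb{P}\Big( \tilde{e}_n(T)\geq \frac{z(n)}{\sqrt{n}}\Big)
\leq \mathbb{P}\Big( \big\{ \tilde{e}_n(T)\geq \tfrac{z(n)}{\sqrt{n}}\big\}\cap \Omega_M^n(T)\Big)
+ \mathbb{P}\big( \Omega_M^n(T)^c\big).
\]
The second term is controlled \emph{uniformly in} $n$ by Lemma~\ref{l1_S2}: as recorded right after the definition \eqref{Omega_M}, one has $\sup_{n\geq 1}\mathbb{P}\big(\Omega_M^n(T)^c\big)\to 0$ as $M\to\infty$. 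The first term will be handled by Chebyshev's inequality fed with the two bounds of Theorem~\ref{th-speed}.

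For the localized term I would use $(a+b+c)^2\leq 3(a^2+b^2+c^2)$ to write
\[
1_{\Omega_M^n(T)}\,\tilde{e}_n(T)^2
\leq 3\,1_{\Omega_M^n(T)}\Big[ \sup_{k}|u^n(t_k)-u(t_k)|^2
+ \int_0^T \Vert u^n(s)-u(s)\Vert^2\,ds
+ \int_0^T \Vert y^n(s)-u(s)\Vert^2\,ds\Big],
\]
where $u^n(t_k)$ is read as $u^n(t_k^+)$, the value carried by the scheme at the grid point. Taking expectations and invoking \eqref{speed-supgrid} for the grid supremum and \eqref{speed-L2} at $t=T$ for the two $L^2(0,T;V)$ integrals gives
\[
\mathbb{E}\big[ 1_{\Omega_M^n(T)}\,\tilde{e}_n(T)^2\big]\leq \frac{C\,K(M,T)}{n},
\qquad K(M,T)=C(T)\exp\big[C(T)e^{3M}\big].
\]
Chebyshev's inequality then yields
\[
\mathbb{P}\Big( \big\{\tilde{e}_n(T)\geq \tfrac{z(n)}{\sqrt{n}}\big\}\cap\Omega_M^n(T)\Big)
= \mathbb{P}\Big( 1_{\Omega_M^n(T)}\tilde{e}_n(T)^2\geq \tfrac{z(n)^2}{n}\Big)
\leq \frac{n}{z(n)^2}\cdot\frac{C\,K(M,T)}{n}
= \frac{C\,K(M,T)}{z(n)^2}.
\]

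It then remains to combine the two estimates \emph{in the right order}. Given $\delta>0$, I would first choose $M$ so large that $\sup_{n\geq 1}\mathbb{P}(\Omega_M^n(T)^c)<\delta/2$; this freezes the constant $K(M,T)$. Since $z(n)\to\infty$, the bound $C\,K(M,T)/z(n)^2$ then tends to $0$, so there is $N$ with $C\,K(M,T)/z(n)^2<\delta/2$ for $n\geq N$. Hence $\limsup_n\mathbb{P}\big(\tilde{e}_n(T)\geq z(n)/\sqrt{n}\big)\leq\delta$, and letting $\delta\downarrow 0$ proves \eqref{speed-proba-0}. The final assertion follows because $\sup_k|u^n(t_k)-u(t_k)|\leq\tilde{e}_n(T)$, so the same tail bound holds for the $H$-error on the grid; this is precisely the Printems order-$1/2$ statement, equivalently the tightness of $\{\sqrt{n}\,\tilde{e}_n(T)\}_n$. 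The only genuine subtlety --- and what I expect to be the one point needing care rather than a real obstacle --- is this order of quantifiers: because $K(M,T)$ blows up doubly exponentially in $M$, one cannot let $M\to\infty$ and $n\to\infty$ simultaneously, but must freeze $M$ using the uniform-in-$n$ smallness of $\mathbb{P}(\Omega_M^n(T)^c)$ before sending $n\to\infty$. No uniformity in $M$ is available, and fortunately none is needed.
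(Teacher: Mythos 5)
Your proposal is correct. The core mechanics coincide with the paper's proof: both split $\mathbb{P}\big(\tilde{e}_n(T)\geq z(n)/\sqrt{n}\big)$ into the contribution of $\Omega_M^n(T)^c$ plus a Chebyshev/Markov term fed by the localized bounds \eqref{speed_sup}--\eqref{speed-L2} of Theorem \ref{th-speed}, and both rely on $\sup_{n}\mathbb{P}\big(\Omega_M^n(T)^c\big)\to 0$ as $M\to\infty$ (which the paper derives from the uniform fourth-moment bounds \eqref{thm-estimate} and \eqref{xin} with $p=2$ via Markov, since $\vert u\vert_{\mathrm{X}}^4\leq C\,\vert u\vert^2\Vert u\Vert^2$). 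The one place where you diverge is exactly the ``order of quantifiers'' point you flag: you freeze $M$ first (so that $K(M,T)$ is a constant and $K(M,T)/z(n)^2\to 0$ trivially), then send $n\to\infty$ and finally $\delta\downarrow 0$. The paper instead diagonalizes, taking $M=M(n)=\ln(\ln(\ln(z(n))))\to\infty$ and using the explicit double-exponential form $K(M,T)=C(T)\exp\big(C(T)e^{3M}\big)$ to check that $C(T)\big(\ln(\ln z(n))\big)^3-2\ln z(n)\to-\infty$. Your version is slightly more elementary and more robust --- it needs only that $K(M,T)<\infty$ for each fixed $M$, not its precise growth --- while the paper's diagonal choice produces a single explicit sequence of localization sets and makes visible how slowly one may let $M\to\infty$ given the constant's blow-up. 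Both yield \eqref{speed-proba-0}.
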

\begin{proof}
Let $z(n)\to \infty$ and let $M(n):=\ln(\ln( \ln (z(n))))$; then  $M(n)\to \infty$. Thus,
using \eqref{thm-estimate} and \eqref{xin} for $p=2$ and the Markov inequality, we
deduce that $\mathbb{P} (\Omega^c_{M(n)}(T)) \to 0$ as $n\to \infty$ .
Finally, note that $C(T) \big(\ln ( \ln z(n))\big)^3 - \ln\big(z(n) \big)  \to - \infty$ as $n\to \infty$
for any positive constant $C(T)$.
Therefore, using the inequalities \eqref{speed_sup}
and \eqref{speed-L2},
the explicit forms of ${K}(T,M(n))$, the choice of  $M(n)$
and Markov's inequality, we deduce that
\begin{align*}
\mathbb{P}\Big( \tilde{e}_n(T)\geq \frac{z(n)}{\sqrt{n}}\Big)\leq &
\mathbb{P}\big(\Omega^c_{M(n)}(T)\big) + \frac{n}{z(n)^2} \mathbb{E}\Big(1_{\Omega_{M(n)}(T) } \tilde{e}_n(T)^2\Big)\\
\leq &  \mathbb{P}\big(\Omega^c_{M(n)}(T)\big) + C(T) \; \frac{n}{z(n)^2}\; \frac{1}{n}  \exp[ C(T) (\ln(\ln(z(n))))^3] \to 0
\end{align*}
as $n\to \infty$; this  concludes the proof.
\end{proof}

\textbf{Acknowledgments:} This paper was partially written while  H. Bessaih
was invited professor at the University of Paris 1 Panth\'eon-Sorbonne.
Parts of this paper were also written while the three authors were visiting the Bernoulli Center in Lausanne.
 They would like to thank the Institute for the financial
support, the very good working conditions and the friendly atmosphere.

Finally, the authors would like to thank the anonymous referees for their careful reading and valuable comments.
\bigskip

\end{document}